\tikzstyle{point}=[ball color=white, circle, draw=black, inner sep=0.1cm]
\tikzstyle{pointblue}=[ball color=blue, circle, draw=black, inner sep=0.1cm]
\tikzstyle{pointred}=[ball color=red, circle, draw=black, inner sep=0.1cm]
\newtheorem{theorem}{Theorem}[section]
\newtheorem{definition}[theorem]{Definition}
\newtheorem{proposition}[theorem]{Proposition}
\newtheorem{lemma}[theorem]{Lemma}
\newtheorem{Remark}[theorem]{Remark}
\DeclareMathOperator{\rad}{rad}
\DeclareMathOperator{\ecc}{ecc}
\title{Some considerations on the maximal safety distance in a graph}
\author[1]{Goran Erceg}
\author[1]{Aljoša Šubašić}
\author[,1]{Tanja Vojkovi\'c \thanks{Corresponding author: tanja@pmfst.hr, Rudjera Bo\v skovi\' ca 33, 21000 Split, Croatia}}
\affil[1]{Faculty of Science, University of Split}
\begin{document}
	
	\maketitle
	\renewcommand{\thefootnote}{\fnsymbol{footnote}}

\begin{abstract}
The work in this paper is motivated by I. Banič's and A. Taranenko's recent paper, where they introduced a new notion, the span of a graph. Their goal was to solve the problem of keeping a safety distance while two actors are moving through a graph and they present three different types of graph spans, depending on the movement rules. We observe the same goal, but give a different approach to that problem by directly defining the maximal safety distance for different movement rules two actors can take. This allowed us to solve several problems, prove some relations between different graph spans and calculate the span values for some classes of graphs.
\end{abstract}
	
Keywords: safety distance, graph spans, strong span, direct span, Cartesian span \\
AMS Subject Classification: 05C12, 	05C90

	\section{Introduction}\label{intro}

	The inspiration for this work is found in Banič and Taranenko's recent paper, \cite{banic}, in which they present a graph theoretical equivalent to the Lelek's topological notion of a span, first defined in \cite{lelek}. It was motivated by the problem of keeping a safety distance between two actors moving in public spaces, the relevance of which has become quite apparent during the recent pandemic years. Many maps can be modeled as a graph, be it city streets, classrooms in school or rooms in the museum. The authors presented three different concepts of a span of a graph, depending on the movement rules the actors are bound by, and for each span they defined the vertex and the edge variant. They analyzed some families of graphs with regard to different spans and gave characterizations for some span values. In our considerations we observe the original problem of keeping a safety distance and we define this notion of a safety distance a bit differently then it was defined in \cite{banic}. \\
	Let us introduce the problem in more detail. The authors of the original paper used Alice and Bob as well known designations for two actors moving through a graph, so we will continue that approach. Let Alice and Bob be two actors moving through graph vertices via graph edges.\\
	Most importantly, Alice and Bob want to keep the maximal possible safety distance from each other in all steps.
	For example, one possible movement through graph $G$ in Figure \ref{pr1} where Alice and Bob visit all the vertices while keeping the distance $2$ from each other is presented in Table \ref{AB}.
	
	\begin{center}
	\captionof{table}{Alice's and Bob's order of movement}\label{AB} 
\begin{tabular}{ |c|c|c|c|c|c|c| } 
 \hline
  & 1 & 2 & 3 & 4 & 5 & 6 \\ 
 \hline
 Alice & $u_{4}$ & $u_{5}$ & $u_{6}$ & $u_{1}$ & $u_{2}$ & $u_{3}$ \\ 
 Bob & $u_{1}$ & $u_{2}$ & $u_{4}$ & $u_{3}$ & $u_{6}$ & $u_{5}$ \\ 
 \hline
\end{tabular}
\end{center}

\begin{center}
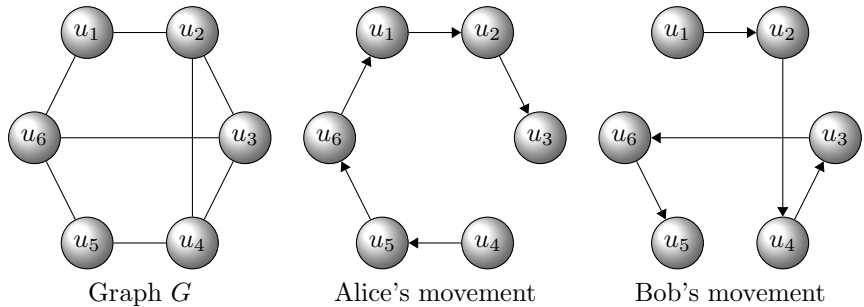

\begin{tabular}{ccc}
\begin{tikzpicture}[scale=0.7]
\node (1) at (1,4) [point] {$u_1$};
\node (2) at (3,4) [point] {$u_2$};
\node (3) at (4,2) [point] {$u_3$};
\node (4) at (3,0) [point] {$u_4$};
\node (5) at (1,0) [point] {$u_5$};
\node (6) at (0,2) [point] {$u_6$};
\draw (1) -- (2) -- (3) -- (4) -- (5) -- (6) -- (1)  (2) -- (4)  (3) -- (6);
\end{tikzpicture} &
\begin{tikzpicture}[scale=0.7]
\node (1) at (1,4) [point] {$u_1$};
\node (2) at (3,4) [point] {$u_2$};
\node (3) at (4,2) [point] {$u_3$};
\node (4) at (3,0) [point] {$u_4$};
\node (5) at (1,0) [point] {$u_5$};
\node (6) at (0,2) [point] {$u_6$};
\draw [-Triangle] (4)--(5);
\draw [-Triangle] (5)--(6);
\draw [-Triangle] (6)--(1);
\draw [-Triangle] (1)--(2);
\draw [-Triangle] (2)--(3);
\end{tikzpicture} & 
\begin{tikzpicture}[scale=0.7]
\node (1) at (1,4) [point] {$u_1$};
\node (2) at (3,4) [point] {$u_2$};
\node (3) at (4,2) [point] {$u_3$};
\node (4) at (3,0) [point] {$u_4$};
\node (5) at (1,0) [point] {$u_5$};
\node (6) at (0,2) [point] {$u_6$};
\draw [-Triangle] (1)--(2);
\draw [-Triangle] (2)--(4);
\draw [-Triangle] (4)--(3);
\draw [-Triangle] (3)--(6);
\draw [-Triangle] (6)--(5);
\end{tikzpicture}\\
Graph $G$ & Alice's movement & Bob's movement
\end{tabular}
\captionof{figure}{ Example of a graph with Alice's and Bob's movement keeping the distance $2$} \label{pr1}
\end{center}

	We will describe their movements as functions, from some set $N_{l}$, representing the order of their movement, to the set of vertices in a given graph. Of course, they can only move from vertex to vertex if they are adjacent. Generally, in each step, we will allow both actors to move to an adjacent vertex, or to stay in place, and our definitions of movement functions will reflect this.\\
	However, there are three possible ways in which Alice and Bob may move through a graph simultaneously:

	\begin{itemize}
	    \item Traditional movement rules: Alice and Bob move independently and at each step each of them may move to the adjacent vertex or stay in place.
	    \item Active movement rules: Both Alice and Bob must move to the adjacent vertex in each step.
	    \item Lazy movement rules: In each step, exactly one of the actors moves to an adjacent vertex, and the other actor stays in place.
	\end{itemize}
	These different rules, together with the requirement to visit all the vertices or all the edges, led the authors of \cite{banic} to define six different spans of a graph. Mathematical definitions of these spans are quite complicated and we present our approach to the problem in Section \ref{spans}. Intuitively, one can imagine the value of a particular span as the maximal possible safety distance that two actors can keep in a given graph with given movement rules. In this paper, we will observe vertex spans only, and we will use the same notation as used in \cite{banic}.\\
	For strong vertex span, corresponding to the traditional movement rules we use $\sigma^{\boxtimes}_{V}(G)$, for direct vertex span, corresponding to the active movement rules we use $\sigma^{\times}_{V}(G)$ and for Cartesian vertex span, corresponding to the lazy movement rules we use $\sigma^{\square}_{V}(G)$.
	
     Our main result, the relation between direct and Cartesian vertex span for each connected simple graph, is presented in Section \ref{odnosi}. Further, we observe values of spans for some classes of graphs, based on considerations from \cite{banic} and our own results, we prove and summarize this in Section \ref{classes}.

	\section{Preliminaries}\label{prelim}
	Let us give an overview of definitions and notation.
	Well known terminology of graph theory can be found in \cite{gross}, however, to facilitate reading, we will define some concepts important for our considerations.\\
	Let $G$ be a connected graph. The\textbf{ distance} $d_{G}(u,v)$, between the vertices $u,v\in V(G)$ is defined as the length of the shortest path between $u$ and $v$. 
	Let $u$ be a vertex in a simple connected graph $G$. The \textbf{eccentricity} of a vertex $u$ in $G$ is defined as
	$$\ecc_{G}(u)=\max\{d_{G}(u,v):v\in V(G)\}.$$
	If it is clear which graph we are observing, we use $\ecc(u)$ and $d(u,v)$.
	The \textbf{radius} of a simple connected graph $G$ is
	$$\rad(G)=\min\{\ecc(u):u\in V(G)\},$$
	and the \textbf{diameter} of a simple connected graph $G$ is
	$$\operatorname{diam}(G)=\max\{\ecc(u):u\in V(G)\}.$$
	Based on the motivation it is clear that it makes sense to observe only simple connected graphs, so the term graph will refer to a simple connected graph further on.\\
	Let $G$ and $H$ be graphs. With $H\subseteq G$ we denote that $H$ is a subgraph of $G$, and with $H\subseteq_{C} G$ we denote that $H$ is a connected subgraph of $G$.\\
	In paper \cite{banic} the authors make a connection between the maximal safety distance in a graph, as described in the Introduction, and particular subgraphs of graph products. They observe strong, Cartesian and direct graph products, corresponding to their definitions of strong, Cartesian and direct span. We will not repeat all their definitions here, but refer the reader to their paper as well as a book about graph products, for instance \cite{hammack}.
	Instead, we proceed to our own definitions of the maximal safety distance and later show how it is connected to the spans in paper \cite{banic}.
	
	With $\mathbb N_l$ we denote the set of first $l$ natural numbers, i.e. the set $\{1, \ldots, l \}.$

	\begin{definition}
	Let $G=(V,E)$ be a graph and $l\in\mathbb{N}$. We say that a surjective function $f_{l}:\mathbb{N}_{l}\longrightarrow V(G)$ is an\textbf{ $l$-track} on $G$ if $f(i)f(i+1)\in E(G)$ holds, for each $i\in\{1,...,l-1\}$.
	\end{definition}

    \begin{definition}
    Let $G=(V,E)$ be a graph and $l\in\mathbb{N}$. We say that a surjective function $f_{l}:\mathbb{N}_{l}\longrightarrow V(G)$ is a\textbf{ lazy $l$-track} on $G$ if $f(i)f(i+1)\in E(G)$ or $f(i)=f(i+1)$ holds, for each $i\in\{1,...,l-1\}$.
    \end{definition}

    It is easy to see that each $l$-track is also a lazy $l$-track. These functions represent a walk through all the vertices one actor takes around the graph, as described in the Introduction.\\
    One can immediately observe that if $f_{l}$ is a lazy $l$-track on some graph $G$, then $l\geq |V(G)|$ must hold.
    
    \begin{proposition}
    Let $G$ be a graph. There exists $l\in\mathbb{N}$ such that there exists a function $f_{l}:\mathbb{N}_{l}\longrightarrow V(G)$ which is a lazy $l$-track on $G$, and for each $l'>l$ there exists a lazy $l'$-track on $G$.
    \end{proposition}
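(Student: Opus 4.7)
The plan is to split the proposition into two parts: (a) produce some $l$ and a lazy $l$-track on $G$; (b) show that from any lazy $l$-track we can manufacture a lazy $l'$-track for every $l' > l$.

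For (a), I would use that $G$ is connected. Enumerate $V(G) = \{v_1, \ldots, v_n\}$ in any order. For each $i \in \{1, \ldots, n-1\}$ fix a path from $v_i$ to $v_{i+1}$ in $G$, and concatenate these paths into a single walk $W$. Reading $W$ as a function $f : \mathbb{N}_L \to V(G)$, where $L$ is the number of vertices traversed by $W$ (counted with repetition), yields a surjective map whose consecutive values are always adjacent in $G$. Hence $f$ is an $L$-track, and every $L$-track is automatically a lazy $L$-track, so $l = L$ works.

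For (b), I plan to induct on $l' - l$. Given a lazy $l$-track $f_l$, define $f_{l+1} : \mathbb{N}_{l+1} \to V(G)$ by $f_{l+1}(i) = f_l(i)$ for $i \leq l$ and $f_{l+1}(l+1) = f_l(l)$. Surjectivity is inherited from $f_l$, since the image is unchanged, and the only new adjacency condition to verify is between positions $l$ and $l+1$, where the two values coincide---which is permitted by the ``stay in place'' clause in the definition of a lazy track. Iterating this padding produces lazy $l'$-tracks for every $l' > l$.

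The real work, if any, lies in part (a), since part (b) is essentially a one-line padding argument. Even in (a) the only point requiring attention is verifying that the concatenated walk actually visits every vertex of $G$, which follows immediately from the fact that each $v_i$ appears as the endpoint of the $i$-th concatenated subpath. Combining the two pieces yields the proposition.
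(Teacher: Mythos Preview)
Your proof is correct and follows essentially the same approach as the paper's. The paper's own argument is extremely terse---it simply invokes connectedness to get a walk through all vertices for the first part and declares the second part ``easily seen''---so your proposal is just a fleshed-out version of the same idea: concatenating paths to build a spanning walk for (a), and padding the last vertex for (b).
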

    
    \begin{proof}
    The claim that there exists $l\in\mathbb{N}$ such that there exists a function $f_{l}:\mathbb{N}_{l}\longrightarrow V(G)$ that is a lazy $l$-track on $G$, follows directly from the fact that $G$ is connected so there exists a walk through all the vertices. The other part is easily seen.
    \end{proof}
    
    Now let us define the distance between two lazy $l$-tracks in a graph.
    
    \begin{definition}
    Let $G$ be a graph and $f,g$ lazy $l$-tracks on $G$. The\textbf{ distance between $f$ and $g$} is defined as
    $$m_{G}(f,g)=\min\{d_{G}(f(i),g(i)):i\in\mathbb{N}_{l}\}.$$
    \end{definition}
    
    \begin{lemma}\label{rad}
    Let $G$ be a graph and $f,g$ lazy $l$-tracks on $G$, for some $l$ for which such tracks exist. It holds
    $$m_{G}(f,g)\leq \rad(G).$$
    \end{lemma}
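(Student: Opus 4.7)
The plan is to exploit two facts simultaneously: that a lazy $l$-track is by definition surjective, and that the radius is realized by some central vertex whose distance to any other vertex is at most $\rad(G)$. The minimum in the definition of $m_G(f,g)$ only needs one time-step $i$ achieving a small distance, so it suffices to produce a single such $i$.

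First I would fix a central vertex $c \in V(G)$, that is, a vertex with $\ecc_G(c) = \rad(G)$. Such a vertex exists by the definition of radius as the minimum of eccentricities over a finite nonempty vertex set. Since $f$ is a lazy $l$-track, it is surjective, so there exists some index $i_0 \in \mathbb{N}_l$ with $f(i_0) = c$.

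Then I would simply evaluate the distance at step $i_0$: whatever vertex $g(i_0)$ happens to be, we have
\[
d_G(f(i_0), g(i_0)) = d_G(c, g(i_0)) \leq \ecc_G(c) = \rad(G),
\]
where the inequality is the definition of eccentricity. Taking the minimum over all $i \in \mathbb{N}_l$ on the left-hand side only decreases the value, so
\[
m_G(f,g) = \min_{i \in \mathbb{N}_l} d_G(f(i), g(i)) \leq d_G(f(i_0), g(i_0)) \leq \rad(G),
\]
which is the claim.

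There is essentially no obstacle here: the argument is a one-line application of surjectivity plus the definition of radius, and the properties of $g$ (even the fact that it is a lazy track rather than an arbitrary function) are not used at all. The only mild subtlety is making sure one notes that surjectivity is built into the definition of a lazy $l$-track, so that the existence of $i_0$ with $f(i_0) = c$ is automatic.
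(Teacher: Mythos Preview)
Your proof is correct and essentially identical to the paper's own argument: pick a central vertex $c$ with $\ecc(c)=\rad(G)$, use surjectivity of $f$ to find $i_0$ with $f(i_0)=c$, and bound $d_G(f(i_0),g(i_0))$ by $\rad(G)$. Your observation that the lazy-track property of $g$ is unused is also accurate.
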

    
    \begin{proof}
    Let $G$ be a graph and $f,g$ lazy $l$-tracks on $G$. Let $v\in V(G)$ be a vertex for which $\ecc(v)=\rad(G)$ holds. From surjectivity of $f$ it follows that $i\in\mathbb{N}_{l}$ exists such that $f(i)=v$. But then we have $d_{G}(f(i),g(i))\leq \rad(G)$, so $m_{G}(f,g)\leq \rad (G)$ follows.
    \end{proof}
    
    \begin{Remark}
    Lemma \ref{rad} is stated and proved in paper \cite{banic} in different terms. Since we will use it on occasion, we considered it best to restate it here in the terms of lazy tracks.
    \end{Remark}

    \section{Maximal safety distances and graph spans}\label{spans}

    In this section we aim to define maximal safety distance formally and to show how these definitions correspond to the concept of graph spans.\\

   \textbf{Strong vertex span}\\
    
    Let $G$ be a graph and $l\in\mathbb{N}$ such that at least one lazy $l$-track exists on $G$. We denote
    
    $$Ms_{l}=\max\{m_{G}(f,g):f \text{ and } g \text{ } \text{ lazy }l  \text{-tracks on } G\}. $$
    
    The existence of $Ms_{l}$  follows from Lemma \ref{rad}.\\
    
    Let $G$ be a graph and let $S\subseteq\mathbb{N}$ be the set of all integers $l$ for which at least one lazy $l$-track exists on $G$. Obviously, $S$ is non-empty. We define
    
    $$Ms(G)=\max\{Ms_{l}:l\in S\}.$$

    This number is the maximal safety distance that can be kept while two actors walk in a graph with traditional movement rules.
    
    To show how $Ms(G)$ equals $\sigma^{\boxtimes}_{V}(G)$ we need one more definition, the same as used in paper \cite{banic}.
    \begin{definition}
    Let $G$ and $H$ be graphs such that $V(H)\subseteq V(G)\times V(G)$. We define
    $$\varepsilon_{G}(H)=\min\{d_{G}(u,v):(u,v)\in V(H)\}.$$
    \end{definition}
    
    \begin{theorem} \label{Mequalsigma}
    Let $G$ be a graph.
    It holds $Ms(G)=\sigma^{\boxtimes}_{V}(G)$.
    \end{theorem}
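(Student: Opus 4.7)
The plan is to prove the equality via two inequalities, using a natural correspondence between pairs of lazy $l$-tracks on $G$ and connected subgraphs of the strong product $G\boxtimes G$ whose coordinate projections cover $V(G)$. Under this correspondence, $m_G(f,g)$ matches $\varepsilon_G(H)$.

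For the direction $Ms(G)\leq \sigma^{\boxtimes}_V(G)$, I would take lazy $l$-tracks $f,g$ realizing $Ms(G)$ and define $H$ to be the subgraph of $G\boxtimes G$ with vertex set $\{(f(i),g(i)) : i\in\mathbb{N}_l\}$ and with an edge between $(f(i),g(i))$ and $(f(i+1),g(i+1))$ whenever those pairs differ. Because the lazy track condition on each coordinate allows either staying or stepping to a neighbor, whenever consecutive pairs differ the transition is exactly one of the three moves that define an edge of $G\boxtimes G$. After deleting consecutive duplicates, the sequence $i\mapsto (f(i),g(i))$ becomes a walk in $H$ visiting every vertex, so $H$ is connected; surjectivity of $f$ and $g$ makes both projections surjective onto $V(G)$; and $\varepsilon_G(H)=\min_i d_G(f(i),g(i)) = m_G(f,g) = Ms(G)$. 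Thus $Ms(G)$ is a valid candidate in the maximum defining $\sigma^{\boxtimes}_V(G)$.

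For the reverse inequality, fix a connected subgraph $H\subseteq_C G\boxtimes G$ with surjective coordinate projections that realizes $\sigma^{\boxtimes}_V(G)$, and choose a walk $(v_1,w_1),\ldots,(v_l,w_l)$ in $H$ visiting every vertex (which exists since $H$ is connected). Setting $f(i)=v_i$ and $g(i)=w_i$, every traversed edge of $G\boxtimes G$ either keeps one coordinate fixed and moves the other along an edge of $G$, or moves both along edges of $G$; in all cases this respects the defining condition of lazy tracks on each of $f$ and $g$ individually. Surjectivity of the projections of $H$, combined with the walk hitting every vertex of $H$, gives surjectivity of $f$ and $g$ onto $V(G)$, so they are genuine lazy $l$-tracks. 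Since $(f(i),g(i))\in V(H)$ for every $i$, we obtain $m_G(f,g)\geq \varepsilon_G(H)=\sigma^{\boxtimes}_V(G)$, as required.

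The main obstacle is the careful handling of the ``stays in place'' behavior: in the strong product, edges never join a vertex to itself, so a constant step in the pair sequence $(f(i),g(i))\mapsto (f(i+1),g(i+1))$ is not traversing an edge of $G\boxtimes G$. The forward direction sidesteps this by simply omitting equal consecutive pairs when building the edge set of $H$, while the reverse direction avoids the issue because a walk in $H$ does not need stays, and connectedness of $H$ guarantees a bona fide walk visiting all vertices. Verifying that these two constructions are mutually inverse at the level of distances is the technical heart of the argument.
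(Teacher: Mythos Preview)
Your proposal is correct and follows essentially the same route as the paper: both directions are obtained via the natural correspondence between pairs of lazy $l$-tracks on $G$ and connected subgraphs $H\subseteq_C G\boxtimes G$ with $p_1(V(H))=p_2(V(H))=V(G)$, using a walk through $V(H)$ in one direction and the sequence $(f(i),g(i))$ in the other. The only cosmetic difference is that the paper proves the stronger statement that the two sets of attainable values coincide (hence their maxima agree), whereas you argue the two inequalities on the maxima directly; your explicit treatment of consecutive equal pairs is, if anything, a bit more careful than the paper's.
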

    \begin{proof}
    Let $G$ be a graph and let $S\subseteq\mathbb{N}$ be a set of all integers $l$ for which at least one lazy $l$-track exists on $G$.
    Let $$A=\{Ms_{l}:l\in S\}$$ and $$B=\{ \varepsilon_G (H) : H \subseteq _C G \boxtimes G \text{ with } p_1(V(H))=p_2(V(H))=V(G)\},$$
    where $\boxtimes$ is strong product of two graphs, \cite{banic}.
    By Theorem 3.3. in \cite{banic} we need to prove that $\max A = \max B.$
    We will prove that $A=B$. 
    Let $r\in A$. Let $f$ and $g$ be lazy $l$-tracks such that $m_G(f,g)=r.$ Denote $$V(H)=\{ (f(i), g(i)) : i \in \mathbb N_l \}$$ and $$ E(H)=\{ f(i)g(i) : i \in \mathbb N_l\}.$$
    From the definition of $f$ and $g$ it follows that $H$ is a connected subgraph of $G \boxtimes G$ and that $p_1(V(H))=p_2(V(H))=V(G)$. Therefore, $\varepsilon _G(H)=r$ so $r \in B.$
    Now, let $r\in B.$ Let $H \subseteq _C G \boxtimes G$ with  $p_1(V(H))=p_2(V(H))=V(G)$ such that $\varepsilon _G(H)=r$. \\
    
    We will now construct a walk through all the vertices in $H$ which will allow us to define lazy tracks in $G$ and conclude that $r\in A$. 
    Choose any $(u_1,v_1) \in V(H)$. We know that $H$ is connected and therefore we can obtain its spanning tree with root $(u_1,v_1)$. Now we construct a walk that visits all the vertices of $H$, starting from $(u_1,v_1)$ and we denote the steps of this walk with natural numbers $1,...,l$, so that we are in the vertex $(u_i,v_i)$ in step $i$. Note that, for some $i\neq j$, $(u_i,v_i)$ and $(u_j,v_j)$ may be the same.
    Also note that $l$ is not necessarily the smallest possible number of steps needed to visit all the nodes of $H$, but it holds $V(H)=\{ (u_i,v_i) : i \in \mathbb N_l \}$. Now we define functions $f : \mathbb N_l: \to V(G)$ and $g : \mathbb N_l: \to V(G)$ with $f(i)=u_i$ and $g(i)=v_i.$ From the definition of the strong product $\boxtimes$ and from $p_1(V(H))=p_2(V(H))=V(G)$ it follows that $f$ and $g$ are lazy $l$-tracks. Furthermore, we have
    \begin{align*}
        m_G(f,g) & = \min \{  d_G(f(i), g(i)) : i\in\mathbb{N}_{l}\} \\
        & = \min \{  d_G(u_i,v_i) : (u_i,v_i) \in V(H)\} \\
        & = \varepsilon_G(H).
    \end{align*}
    Therefore, $m_G(f,g)=r$ so $r \in A$. \\
    We have proved that $A=B$ so it follows that $\max A = \max B.$
    \end{proof}


       \textbf{Direct vertex span}\\
       
       Let $G$ be a graph and $l\in\mathbb{N}$ such that at least one $l$-track exists on $G$. We denote
    
    $$Md_{l}=\max\{m_{G}(f,g):f \text{ and } g \text{ } l  \text{-tracks on } G\}. $$
    
    The existence of this number follows from Lemma \ref{rad}.\\
    
    Let $G$ be a graph and let $D\subseteq\mathbb{N}$ be a set of all integers $l$ for which at least one $l$-track exists on $G$. We define
    
    $$Md(G)=\max\{Md_{l}:l\in D\}.$$
    
    This number is the maximal safety distance that can be kept while two actors walk in a graph with active movement rules. \\
    
    \begin{theorem}\label{md} Let $G$ be a graph.
    It holds $Md(G)=\sigma^{\times}_{V}(G)$.
    \end{theorem}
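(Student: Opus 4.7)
The plan is to mirror the proof of Theorem \ref{Mequalsigma} almost verbatim, with the strong product $\boxtimes$ replaced by the direct product $\times$ and lazy $l$-tracks replaced by (non-lazy) $l$-tracks. By the analogue of Theorem 3.3 in \cite{banic} for the direct product, it suffices to prove
\[
\max A = \max B
\]
where
\[
A=\{Md_{l}:l\in D\},\qquad B=\{\varepsilon_G(H) : H\subseteq_C G\times G,\ p_1(V(H))=p_2(V(H))=V(G)\}.
\]
As in the strong case, I would actually establish the stronger statement $A=B$.

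For $A\subseteq B$, I would take $l$-tracks $f,g$ with $m_G(f,g)=r$ and set $V(H)=\{(f(i),g(i)) : i\in\mathbb N_l\}$ and $E(H)=\{(f(i),g(i))(f(i+1),g(i+1)) : i\in\mathbb N_{l-1}\}$. The defining property of an $l$-track (as opposed to a lazy $l$-track) is precisely that $f(i)f(i+1)\in E(G)$ \emph{and} $g(i)g(i+1)\in E(G)$ for every $i$, which is exactly the edge condition in $G\times G$. Hence $H$ is a connected subgraph of $G\times G$, surjectivity of $f,g$ gives full projections, and $\varepsilon_G(H)=r$.

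For $B\subseteq A$, given such an $H$ with $\varepsilon_G(H)=r$, I would pick a spanning tree of $H$ and use it to produce a walk $(u_1,v_1),(u_2,v_2),\ldots,(u_l,v_l)$ that visits every vertex of $H$, then define $f(i)=u_i$ and $g(i)=v_i$. Since consecutive pairs in the walk are joined by edges of $H\subseteq G\times G$, each transition moves \emph{both} coordinates along an edge of $G$, so $f$ and $g$ are genuine $l$-tracks (not just lazy). Surjectivity follows from $p_1(V(H))=p_2(V(H))=V(G)$, and the same computation as in the proof of Theorem \ref{Mequalsigma} yields $m_G(f,g)=\varepsilon_G(H)=r$.

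The only genuinely new point, compared with the strong case, is the observation that edges in the direct product force simultaneous movement on both sides, which is exactly what converts a walk in $H$ into a pair of $l$-tracks rather than lazy $l$-tracks; this is the step I would take most care over, since it is where the active movement rule enters. Everything else, including the existence of the required $l$ in $D$, is already guaranteed because $H$ being a connected subgraph with full projections supplies a walk whose length $l$ lies in $D$.
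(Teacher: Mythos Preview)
Your proposal is correct and is exactly the argument the paper intends: the paper's own proof simply states that it is analogous to the proof of Theorem~\ref{Mequalsigma} and omits the details. Your explicit identification of the key point---that edges in the direct product $G\times G$ force both coordinates to move along edges of $G$, which is precisely what yields genuine $l$-tracks rather than lazy ones---is the only substantive change from the strong case, and you have handled it correctly.
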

    \begin{proof}
    Proof is analogous to the proof of the Theorem \ref{Mequalsigma} so we omit it.
    \end{proof}
    \newpage
       \textbf{Cartesian vertex span}\\
       
       In order to formally describe lazy movement rules, we introduce the notion of opposite lazy $l$-tracks.\\
       
       \begin{definition}
       Let $G$ be a graph, $l\in\mathbb{N}$ and $f,g:\mathbb{N}_{l}\longrightarrow V(G)$ lazy $l$-tracks on $G$. We say that $f$ and $g$ are\textbf{ opposite lazy $l$-tracks} on $G$ if
       $$f(i)f(i+1)\in E(G) \text{ and } g(i)=g(i+1) \text{ or}$$
       $$g(i)g(i+1)\in E(G) \text{ and } f(i)=f(i+1).$$
       \end{definition}
       
       Let $G$ be a graph and $l\in\mathbb{N}$ such that at least one pair of opposite lazy $l$-tracks exists on $G$. For $l\in\mathbb{N}$, let $A_{l}$ be the set of all pairs of opposite lazy $l$-tracks on $G$. We denote
    $$Mc_{l}=\max\{m_{G}(f,g):(f,g)\in A_{l}\}.$$
    The existence of this number follows from Lemma \ref{rad}.\\
    
    Let $G$ be a graph and let $C\subseteq\mathbb{N}$ be a set of all integers $l$ for which $A_{l}$ is a non-empty set. We define
    $$Mc(G)=\max\{Mc_{l}:l\in C\}.$$

    This number is the maximal safety distance that can be kept while two actors walk in a graph with lazy movement rules. 
    
    \begin{theorem}\label{mc} Let $G$ be a graph.
    It holds $Mc(G)=\sigma^{\square}_{V}(G)$.
    \end{theorem}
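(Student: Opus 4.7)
The plan is to mimic the proof of Theorem \ref{Mequalsigma} verbatim, replacing the strong product $\boxtimes$ by the Cartesian product $\square$ and lazy $l$-tracks by opposite lazy $l$-tracks. By the analogue of \cite{banic} (Theorem 3.3 for the Cartesian case) it suffices to show that
\[
A = \{Mc_l : l\in C\} \quad \text{equals} \quad B = \{\varepsilon_G(H) : H\subseteq_C G\square G,\ p_1(V(H))=p_2(V(H))=V(G)\},
\]
from which $\max A = \max B$ will give $Mc(G)=\sigma^{\square}_V(G)$.

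For the inclusion $A\subseteq B$, I would start from a pair $(f,g)$ of opposite lazy $l$-tracks on $G$ realising $m_G(f,g)=r$, and set $V(H)=\{(f(i),g(i)):i\in\mathbb{N}_l\}$ with $E(H)=\{(f(i),g(i))(f(i+1),g(i+1)) : i\in\{1,\ldots,l-1\},\ (f(i),g(i))\neq(f(i+1),g(i+1))\}$. The defining condition of opposite lazy tracks says that at each step either $f(i)f(i+1)\in E(G)$ with $g(i)=g(i+1)$, or $g(i)g(i+1)\in E(G)$ with $f(i)=f(i+1)$, which is precisely the condition for $(f(i),g(i))(f(i+1),g(i+1))$ to be an edge of $G\square G$. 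Hence $H$ is a connected subgraph of $G\square G$, the surjectivity of $f$ and $g$ gives $p_1(V(H))=p_2(V(H))=V(G)$, and $\varepsilon_G(H)=\min_i d_G(f(i),g(i))=r$, so $r\in B$.

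For the inclusion $B\subseteq A$, I would take $H\subseteq_C G\square G$ with both projections equal to $V(G)$ and $\varepsilon_G(H)=r$, choose a root $(u_1,v_1)\in V(H)$, pick a spanning tree of $H$, and traverse it by a walk $(u_1,v_1),(u_2,v_2),\ldots,(u_l,v_l)$ that visits every vertex, exactly as in the strong case. Defining $f(i)=u_i$ and $g(i)=v_i$, the key observation is now that because the consecutive pair $(u_i,v_i)(u_{i+1},v_{i+1})$ is an edge of $G\square G$, the definition of the Cartesian product forces either $u_i=u_{i+1}$ and $v_iv_{i+1}\in E(G)$, or $v_i=v_{i+1}$ and $u_iu_{i+1}\in E(G)$. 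This is exactly the opposite lazy track condition, so $(f,g)\in A_l$, and $m_G(f,g)=\varepsilon_G(H)=r$, giving $r\in A$.

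The only place where genuine work (as opposed to copying the earlier argument) is required is the bidirectional correspondence between edges of $G\square G$ and the opposite lazy track condition; this is where the lazy movement rules enter essentially, and it is the step I would set out most carefully. Everything else — surjectivity of the projections, connectedness via the spanning tree, and the equality $m_G(f,g)=\varepsilon_G(H)$ — is identical to Theorem \ref{Mequalsigma} and needs no new idea.
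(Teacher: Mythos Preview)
Your proposal is correct and follows exactly the approach the paper intends: the paper's own proof of this theorem is simply ``Proof is analogous to the proof of the Theorem \ref{Mequalsigma} so we omit it,'' and what you have written is precisely that analogy spelled out, with the crucial (and correct) observation that edges of $G\square G$ correspond bijectively to the opposite lazy track condition. Nothing more is needed.
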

    \begin{proof}
    Proof is analogous to the proof of the Theorem \ref{Mequalsigma} so we omit it.
    \end{proof}
    
We have shown that the definitions of maximal safety distance  presented here indeed define vertex spans presented in \cite{banic} so further on we only use the span notation, $\sigma^{\boxtimes}_{V}(G)$, $\sigma^{\times}_{V}(G)$ and $\sigma^{\square}_{V}(G)$.

\begin{Remark}
As we said in the Introduction, some claims will be proven by describing Alice's and Bob's movement in the graph, instead of formally defining $l$-tracks for some $l\in\mathbb{N}$. In those cases we will use the expressions such as Alice's and Bob's lazy walks, walks, or opposite lazy walks in a graph, which correspond to traditional, active and lazy movement rules, respectively. This approach is used to make the proofs more fluid and more understandable to the reader.
\end{Remark}


\section{Relation between different vertex spans}\label{odnosi}

Here we take a look at the relations between strong, direct and Cartesian vertex span.

\begin{proposition}\label{strong}
Let $G$ be a graph. It holds $\sigma^{\boxtimes}_V(G)\geq \max\{\sigma^{\times}_V(G),\sigma^{\square}_V(G)\}$.
\end{proposition}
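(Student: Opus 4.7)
The plan is to show each of the two inequalities $\sigma_V^{\times}(G)\le\sigma_V^{\boxtimes}(G)$ and $\sigma_V^{\square}(G)\le\sigma_V^{\boxtimes}(G)$ by a direct inclusion of the classes of track-pairs that define them. The whole argument reduces to a careful unpacking of the three definitions; no constructive work is really required.

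First I would observe, as was already noted in the paper immediately after the definition of a lazy $l$-track, that every $l$-track is a lazy $l$-track (the condition $f(i)f(i+1)\in E(G)$ is a special case of ``$f(i)f(i+1)\in E(G)$ or $f(i)=f(i+1)$''). Hence, fixing $l\in D$, every pair $(f,g)$ of $l$-tracks on $G$ is in particular a pair of lazy $l$-tracks, so $l\in S$ and $Md_l\le Ms_l$. Taking the maximum over $l\in D\subseteq S$ gives
\[
\sigma_V^{\times}(G)=Md(G)=\max_{l\in D}Md_l\le\max_{l\in D}Ms_l\le\max_{l\in S}Ms_l=Ms(G)=\sigma_V^{\boxtimes}(G),
\]
using Theorems \ref{Mequalsigma} and \ref{md}.

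Next I would do the analogous step for the Cartesian case. If $(f,g)\in A_l$ is a pair of opposite lazy $l$-tracks, then for each $i$ either $f$ traverses an edge while $g$ stays, or $g$ traverses an edge while $f$ stays; in either case both $f(i)f(i+1)\in E(G)\cup\{f(i)f(i)\}$ and $g(i)g(i+1)\in E(G)\cup\{g(i)g(i)\}$. Thus $f$ and $g$ are individually lazy $l$-tracks, so $l\in C$ implies $l\in S$ and $Mc_l\le Ms_l$. Maximising over $l\in C\subseteq S$ yields $\sigma_V^{\square}(G)=Mc(G)\le Ms(G)=\sigma_V^{\boxtimes}(G)$ by Theorem \ref{mc}.

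Combining the two inequalities gives $\sigma_V^{\boxtimes}(G)\ge\max\{\sigma_V^{\times}(G),\sigma_V^{\square}(G)\}$, which is the claim. There is no real obstacle here; the only point worth being pedantic about is keeping track of the index sets $S,D,C\subseteq\mathbb{N}$ and noting the inclusions $D\subseteq S$ and $C\subseteq S$, so that the maxima are genuinely taken over larger families of track-pairs on the strong side.
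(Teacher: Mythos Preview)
Your argument is correct and is exactly the approach the paper intends: the paper's own proof simply says the claim ``easily follows from the definitions of $Ms(G)$, $Md(G)$, $Mc(G)$ and Theorems~\ref{Mequalsigma}, \ref{md} and \ref{mc}'', and what you have written is a faithful unpacking of that sentence via the inclusions $D\subseteq S$, $C\subseteq S$ and the corresponding inequalities $Md_l\le Ms_l$, $Mc_l\le Ms_l$. One minor remark: in the Cartesian case you need not separately verify that opposite lazy $l$-tracks are lazy $l$-tracks, since the paper's definition already stipulates this.
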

\begin{proof}
The claim easily follows from the definitions of $Ms(G)$, $Md(G)$, $Mc(G)$ and Theorems \ref{Mequalsigma}, \ref{md} and \ref{mc}.
\end{proof}

\begin{theorem}\label{razlika}
Let $G$ be a graph. Then $$|\sigma^{\times}_V(G)-\sigma^{\square}_V(G)| \leq 1.$$
\end{theorem}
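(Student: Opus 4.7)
The plan is to split the inequality into two halves, $\sigma^{\square}_{V}(G) \geq \sigma^{\times}_{V}(G) - 1$ and $\sigma^{\times}_{V}(G) \geq \sigma^{\square}_{V}(G) - 1$, and in each case build tracks of the required kind whose minimum distance is at most one less than optimal.

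For the first inequality, I would take $l$-tracks $f, g$ realizing $\sigma^{\times}_{V}(G) = k$ and ``slow them down'' into opposite lazy $(2l-1)$-tracks $f', g'$ by setting $f'(2i-1)=f(i)$, $g'(2i-1)=g(i)$ and inserting one intermediate step in which only Alice has moved: $f'(2i)=f(i+1)$, $g'(2i)=g(i)$. Surjectivity is immediate since $f', g'$ visit the same vertices as $f, g$, and exactly one coordinate changes at each new step so the pair is opposite lazy. The only distance not directly inherited from the original walk occurs at even indices, and since $g$ is active we have $g(i)\sim g(i+1)$, so the triangle inequality gives
$$d_{G}(f(i+1), g(i)) \geq d_{G}(f(i+1), g(i+1)) - 1 \geq k - 1,$$
proving $\sigma^{\square}_{V}(G) \geq k - 1$.

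For the second inequality, I would take opposite lazy $l$-tracks $f, g$ realizing $\sigma^{\square}_{V}(G) = k$ and pair up the $l-1$ lazy transitions two at a time. Each pair is of one of four types $(A,B), (B,A), (A,A), (B,B)$, where $A$ means ``Alice moves, Bob waits'' and $B$ the opposite. For a mixed pair, both coordinates change from $(f(2i-1), g(2i-1))$ to $(f(2i+1), g(2i+1))$, so these endpoints are adjacent in $G \times G$ and I would collapse the pair into this single active step; both endpoints inherit distance $\geq k$ from the original. For a same-type pair, say $(A,A)$ in which Bob sits at some vertex $v$ while Alice traverses $f(2i-1)\to f(2i)\to f(2i+1)$, I would replace the pair with two active steps $(f(2i-1), v) \to (f(2i), y) \to (f(2i+1), v)$, where $y$ is any neighbor of $v$ (which exists since $G$ is connected and has more than one vertex), and symmetrically for $(B,B)$. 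An unpaired trailing transition, when $l-1$ is odd, becomes a single active step in which the previously idle actor moves to an arbitrary neighbor. The new tracks inherit surjectivity from $f, g$ (the intermediate positions may visit extra neighbors but never miss an original vertex), and at the only genuinely new position $(f(2i), y)$ (and its symmetric cousin) the triangle inequality yields
$$d_{G}(f(2i), y) \geq d_{G}(f(2i), v) - 1 \geq k - 1,$$
so $\sigma^{\times}_{V}(G) \geq k - 1$.

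The main obstacle I expect is exactly the same-type case in the second direction: a naive attempt to convert each lazy step to a single active step runs into a parity obstruction, most visibly when $G$ is bipartite, because the idle actor cannot always be ``woken up'' inside a single step without breaking the bipartition. Pairing the transitions and spending two active steps on each same-type pair absorbs the parity mismatch, at a cost of exactly the one unit of distance that the theorem allows.
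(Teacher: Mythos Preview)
Your proposal is correct and follows essentially the same approach as the paper's own proof: the same ``interleaving'' construction for part (1), and the same ``pair up the lazy transitions, collapse mixed pairs, detour the stationary actor to a neighbour and back on same-type pairs, handle a leftover transition with one extra neighbour step'' construction for part (2). The only differences are cosmetic (your indexing via $f'(2i-1), f'(2i)$ versus the paper's ceiling formulas, and your $A/B$ labels versus the paper's $1/2$ sequence $X$).
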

\begin{proof}
For any graph $G$, we will prove the following: 
\begin{itemize}
    \item [(1)]$ \sigma^{\square}_V(G) \geq \sigma^{\times}_V(G)-1$;
    \item [(2)]$ \sigma^{\times}_V(G) \geq \sigma^{\square}_V(G)-1$.
\end{itemize}
Proof of (1):\\
Let $\sigma^{\times}_V(G)=r$. Then there exist $l$-tracks $f$ and $g$ such that for each $i \in \mathbb N_l$, $d(f(i),g(i)) \geq r$. Let us define lazy $(2l-1)$-tracks $f'$ and $g'$ in the following manner:
$$f'(i)=f(\left\lceil\frac{i+1}{2}\right\rceil);$$
$$g'(i)=g(\left\lceil\frac{i}{2}\right\rceil).$$
We claim that $f'$ and $g'$ are opposite lazy tracks and that for each $i \in \mathbb N_{2l-1}$, $d(f'(i),g'(i)) \geq r-1$.
First, let us prove that $f'$ and $g'$ are opposite lazy tracks.\\
For each odd $i$, i.e., $i=2k-1$, $f'(i)f'(i+1)$ is in $E(G)$ because
$$f'(i)f'(i+1)=f(\left\lceil\frac{2k}{2}\right\rceil)f(\left\lceil\frac{2k+1}{2}\right\rceil=f(k)f(k+1) \in E(G);$$ and it holds
$$g'(i)=g(\left\lceil\frac{2k-1}{2}\right\rceil)=g(k)=g(\left\lceil\frac{2k}{2}\right\rceil)=g'(i+1).$$
Similarly, for each even $i$, i.e., $i=2k$, it holds
$$f'(i)=f(\left\lceil\frac{2k+1}{2}\right\rceil)=f(k+1)=f(\left\lceil\frac{2k+2}{2}\right\rceil)=f'(i+1);$$ and $g'(i)g'(i+1)$ is in $E(G)$:
$$g'(i)g'(i+1)=g(\left\lceil\frac{2k}{2}\right\rceil)g(\left\lceil\frac{2k+1}{2}\right\rceil)=g(k)g(k+1) \in E(G).$$
Now, let us prove that for each $i \in \mathbb N_{2l-1}$, $d(f'(i),g'(i)) \geq r-1$.\\
Again, for each odd $i$, i.e., $i=2k-1$, it holds:
$$d(f'(i),g'(i))=d(f(\left\lceil\frac{2k}{2}\right\rceil),g(\left\lceil\frac{2k-1}{2}\right\rceil))
=d(f(k),g(k)) \geq r.$$
Similarly, for each even $i$, i.e., $i=2k$, it holds:
$$d(f'(i),g'(i))=d(f(\left\lceil\frac{2k+1}{2}\right\rceil),g(\left\lceil\frac{2k}{2}\right\rceil))
=d(f(k+1),g(k)).$$
Since $f(k)f(k+1)\in E(G)$, then $$d(f(k+1),g(k)) \geq d(f(k),g(k))-1 \geq r-1.$$
Proof of (2):\\
Let $\sigma^{\square}_V(G)=r$. Then there exist opposite lazy $l$-tracks $f$ and $g$ such that for each $i \in N_l$, $d(f(i),g(i)) \geq r$.\\
Since, $f$ and $g$ are opposite, we will first define a $(l-1)$-sequence $X$ of $1$'s and $2$'s in a following manner:
$$
X(i):=
\left\{
	\begin{array}{ll}
		1  & \mbox{if } f(i)f(i+1) \in E(G)\\
		2  & \mbox{if } g(i)g(i+1) \in E(G)\\
	\end{array}
\right.\\
$$
Now, we separate the sequence $X$ into pairs $(X(2k-1),X(2k))$ for $k  \in \mathbb N_{\left\lfloor\frac{l-1}{2}\right\rfloor}$. If $l-1$ is odd we will leave the element $X(l-1)$ unpaired.
All the pairs must be elements of $\{(1,1),(1,2),(2,1),(2,2)\}$. Let $a$ be the number of $(1,2)$ and $(2,1)$ pairs combined. We now define $(l-a)$-tracks $f'$ and $g'$ using the following algorithm:

We first define $f'(1)=f(1)$ and $g'(1)=g(1)$, and set the value of $b$ to $0$ and the value of $i$ to $1$. 
Now, we go through pairs $(X(2k-1),X(2k))$ for $k  \in \mathbb N_{\left\lfloor\frac{l-1}{2}\right\rfloor}$ one by one and do the following:\\
\indent If the pair is $(1,1)$ we increase the value of $i$ by two and define
$$f'(i-1)=f(i-1+b); f'(i)=f(i+b),$$ and also
$$g'(i-1)=x; g'(i)=g(i+b),$$ where $x$ is any neighbour of $g(i+b)$.\\
\indent Similarly, if the pair is $(2,2)$ we increase the value of $i$ by two and define
$$f'(i-1)=x; f'(i)=f(i+b),$$ and also
$$g'(i-1)=g(i-1+b); g'(i)=g(i+b),$$ where $x$ is any neighbour of $f(i+b)$.\\
\indent If the pair is $(1,2)$ or $(2,1)$ we increase the value of $b$ and $i$ by one and define
$$f'(i)=f(i+b); g'(i)=g(i+b).$$
\indent Lastly, if we have any unpaired $X(l-1)$, we increase the value of $i$ by one and define
$$f'(i)=f(i+b); g'(i)=x,$$
if $X(l-1)=1$, or
$$f'(i)=y; g'(i)=g(i+b),$$
if $X(l-1)=2$, where $x$ and $y$ are any neighbours of $g(i+b)$ and $f(i+b)$ respectfully.
By construction, we see that $f'$ and $g'$ are $(l-a)$-tracks and that for each $i \in \mathbb N_{l-a}$, $d(f'(i),g'(i)) \geq r-1$. \\
Now from (1) and (2) we have $$\sigma^{\times}_V(G)-1 \leq \sigma^{\square}_V(G) \leq \sigma^{\times}_V(G)+1,$$
which ends our proof.
\end{proof}

\begin{Remark}
To fully clarify the second, more algorithmic, part of the proof of Theorem \ref{razlika}, we will additionally explain it in terms of Alice and Bob. When Alice and Bob are walking in opposite lazy walks, while one of them is walking, the other is standing still and vice versa. That means, if we have their lazy walks that keep the safety distance $r$, we can first derive the order in which the two of them walked through all the vertices. When constructing their walks in active movement rules we are starting from the same position they started in lazy movement rules and then we go through the order in which they moved. Two different situations may happen. Either one of them moved, and then the other one, or one of them moved twice in a row while the other stood still for two steps. In case one of them moved and then the other one, those two movements will just happen simultaneously in active movement rules, and Alice and Bob will end up in a position they were in with lazy movement rules, still at safety distance $r$. In case of one of them moving twice in a row, while the other stands still, we can transfer those movements to active movement rules just by moving the one who stood still to any neighbour vertex and back, thus possibly lowering the safety distance by $1$, but still keeping it at least $r-1$. 
\end{Remark}

Let us also notice that the bound in Theorem \ref{razlika} is tight. For example, let us observe the two graphs in Figure \ref{pr2}. 

\begin{center}
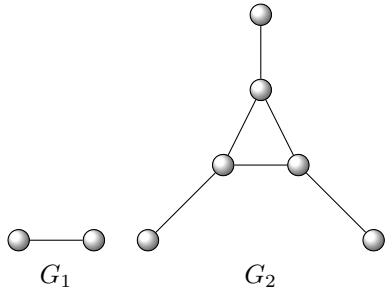

\begin{tabular}{cc}
\begin{tikzpicture}
\node (1) at (0,0) [point] {};
\node (2) at (1,0) [point] {};
\draw (1) -- (2);
\end{tikzpicture}
&
\begin{tikzpicture}
\node (1) at (0,0) [point] {};
\node (2) at (1,1) [point] {};
\node (3) at (2,1) [point] {};
\node (4) at (3,0) [point] {};
\node (5) at (1.5,2) [point] {};
\node (6) at (1.5,3) [point] {};
\draw (1) -- (2) -- (3) -- (4) (2) -- (5) (3) -- (5) (5) -- (6);
\end{tikzpicture}\\
$G_{1}$ & $G_{2}$
\end{tabular} 

\captionof{figure}{ Graphs for which the bounds of Theorem \ref{razlika} are obtained} 
\label{pr2}
\end{center}
It can be easily seen that $\sigma^{\times}_V(G_{1})=1$ and $\sigma^{\square}_V(G_{1})=0$. Similarly, we can easily see that $\sigma^{\times}_V(G_{2})=1$ and $\sigma^{\square}_V(G_{2})=2$.


\section{Spans of some graph classes}\label{classes}
	
First we take a look at the way a cut-edge in a graph might help us reduce the problem of calculating the span values of a graph to its subgraphs determined by cut-edge. This result might lead to an algorithm for calculating upper bond for spans, for some graphs better then the radius. We used the reasoning from proof of Proposition \ref{cut} to calculate some results for this paper, for instance in Proposition \ref{najmanji}.

\begin{proposition}\label{cut}
Let $G$ be a graph such that $|V(G)| \geq 3$. Let $xy$ be a cut-edge in $G$ and $G_1$ and $G_2$ subgraphs of $G$ induced by components of $G-xy$ such that $x \in V(G_1)$ and $y \in V(G_2)$. Then $$\sigma^{\boxtimes}_V(G) \leq \max\{\ecc_{G_1}(x),\ecc_{G_2}(y)\}.$$
\end{proposition}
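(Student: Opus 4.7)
The plan is to appeal to Theorem \ref{Mequalsigma}, which lets us replace $\sigma^{\boxtimes}_V(G)$ by $Ms(G)$; thus it suffices to show that for every pair of lazy $l$-tracks $f,g$ on $G$, one has $m_G(f,g)\leq \max\{\ecc_{G_1}(x),\ecc_{G_2}(y)\}$. The structural fact to exploit is that $xy$ is the unique edge joining $V(G_1)$ with $V(G_2)$, so (i) for any $u\in V(G_j)$ the shortest path in $G$ from $x$ (if $j=1$) or $y$ (if $j=2$) to $u$ stays inside $G_j$, and in particular $d_G(x,u)=d_{G_1}(x,u)\leq \ecc_{G_1}(x)$ when $u\in V(G_1)$ and analogously for $G_2$; and (ii) any step of a lazy track that moves between the two sides must be a traversal of the edge $xy$, since the alternative (staying put) keeps one inside a single component.

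First I would note that surjectivity of $f$ forces it to take values in both components, so there is a step $i$ at which Alice crosses $xy$; without loss of generality assume $f(i)=x$ and $f(i+1)=y$. Then I would carry out a short case analysis on where Bob is at steps $i$ and $i+1$. If $g(i)\in V(G_1)$, then $d_G(f(i),g(i))=d_{G_1}(x,g(i))\leq \ecc_{G_1}(x)$. If $g(i)\in V(G_2)$ and $g(i+1)\in V(G_2)$, then $d_G(f(i+1),g(i+1))=d_{G_2}(y,g(i+1))\leq \ecc_{G_2}(y)$. The remaining possibility is that Bob simultaneously crosses the edge in the opposite direction, forcing $g(i)=y$ and $g(i+1)=x$; in that case $d_G(f(i+1),g(i+1))=d_G(y,x)=1$. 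Each of these bounds is at most $\max\{\ecc_{G_1}(x),\ecc_{G_2}(y)\}$, and for the last case this uses the hypothesis $|V(G)|\geq 3$, which guarantees that at least one of $G_1,G_2$ has more than one vertex and hence that the maximum is at least $1$.

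The main subtlety I anticipate is that one must examine Bob's position at both steps $i$ and $i+1$ rather than just one: inspecting a single step is not enough to handle the corner case in which Alice and Bob swap sides along $xy$ at the same moment. Once that configuration is handled, the case analysis is immediate, and since $f,g$ were arbitrary lazy tracks we obtain $Ms(G)\leq \max\{\ecc_{G_1}(x),\ecc_{G_2}(y)\}$, which by Theorem \ref{Mequalsigma} is the desired bound on $\sigma^{\boxtimes}_V(G)$.
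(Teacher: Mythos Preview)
Your proof is correct and follows essentially the same idea as the paper's: find a moment at which one player is at an endpoint of the cut-edge and bound the distance by the appropriate eccentricity, with the simultaneous-swap case handled separately using $|V(G)|\geq 3$. The only difference is organizational---the paper splits into three situations according to where Alice and Bob start and then argues about who first crosses, whereas you go directly to a crossing step of $f$ and do a case split on $g(i)$ and $g(i+1)$; your version is a touch more compact but the argument is the same.
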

\begin{proof}
We will provide this proof in terms of Alice and Bob. Depending on where they start their walks we have three different situations.\\
Situation 1: Both Alice and Bob start their walks in some vertex of $G_1$.
In that case, at some point, both of them will have to visit vertices of $G_2$, and the only way to go there is through the cut-edge $xy$. Let us say, without any loss of generality,  that Alice leaves $G_1$ first. When she gets to vertex $x$, Bob is still in $G_1$, so at that point their distance is for sure less than or equal to $\ecc_{G_1}(x)$. So our claim holds.\\
Situation 2: Both Alice and Bob start their walks in some vertex of $G_2$.
Analogously as in situation 1, we see that, at some point, when leaving $G_2$, the distance between Alice and Bob will have to be less than or equal to $\ecc_{G_2}(y)$.\\
Situation 3: Alice and Bob start their walks in vertices of different subgraphs $G_1$ and $G_2$. Let us say, without any loss of generality, that Alice starts in $G_1$, and Bob in $G_2$. Either one of them will walk through cut-edge $xy$ to the other subgraph, at which point we will end up in situation 1 or 2, or they will both switch to the other subgraph at the same time, in which case, they will have to be in vertices $x$ and $y$ at the same step, and at that point their distance will be $1$, which is less then or equal to $\max\{\ecc_{G_1}(x),\ecc_{G_2}(y)\}$, since, because of $|V(G)| \geq 3$, at least one of the graphs $G_1$ and $G_2$ is non-trivial.
\end{proof}

\begin{Remark}
Since we already had that $\sigma^{\boxtimes}_V(G) \leq \rad(G)$, one might wonder if this is an improvement of that result and how does $\max\{\ecc_{G_1}(x),\ecc_{G_2}(y)\}$ compare to $rad(G)$. The two measures are incomparable as there are examples of graphs where either one is greater than the other, as is shown in Figure \ref{remark}. So this is an additional result that one might use when calculating upper bonds for spans of certain types of graphs.\\
\begin{center}
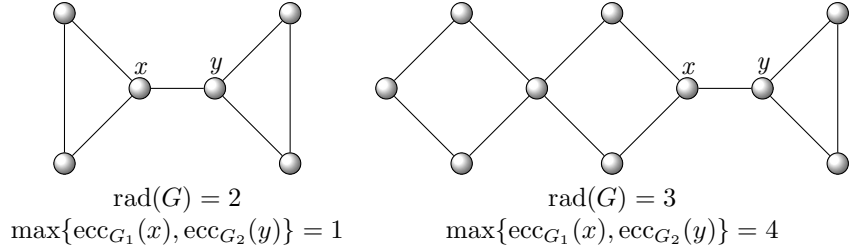

    \begin{tabular}{cc}
\begin{tikzpicture}
\node (1) at (0,0) [point] {};
\node (2) at (0,2) [point] {};
\node (3) at (3,0) [point] {};
\node (4) at (3,2) [point] {};
\node (5) at (1,1) [point] {};
\node (6) at (2,1) [point] {};
\node[above, outer sep=2pt] at (1,1) {x};
\node[above, outer sep=2pt] at (2,1) {y};
\draw (5) -- (2) -- (1) -- (5) -- (6) -- (4) -- (3) -- (6);
\end{tikzpicture}
&
\begin{tikzpicture}
\node (1) at (1,0) [point] {};
\node (2) at (3,0) [point] {};
\node (3) at (6,0) [point] {};
\node (4) at (0,1) [point] {};
\node (5) at (2,1) [point] {};
\node (6) at (4,1) [point] {};
\node (7) at (5,1) [point] {};
\node (8) at (1,2) [point] {};
\node (9) at (3,2) [point] {};
\node (10) at (6,2) [point] {};
\node[above, outer sep=2pt] at (4,1) {x};
\node[above, outer sep=2pt] at (5,1) {y};
\draw (6) -- (2) -- (5) -- (1) -- (4) -- (8) -- (5) -- (9) -- (6) -- (7) -- (10) -- (3) -- (7);
\end{tikzpicture}
\\
$\rad(G)=2$
& 
$\rad(G)=3$
\\
$\max\{\ecc_{G_1}(x),\ecc_{G_2}(y)\}=1$
&
$\max\{\ecc_{G_1}(x),\ecc_{G_2}(y)\}=4$
\\
    \end{tabular}
    \captionof{figure}{Graphs that illustrate the difference between the radius of a graph and the maximum of the eccentricities of the ends of a cut-edge}\label{remark} 
\end{center}
\end{Remark}


In further observations of spans of different graph families we mostly encountered graphs for which direct vertex span was greater than or equal to their Cartesian vertex span. So it prompted us to to find 
a family of graphs for which a greater safety distance can be obtained following lazy movement rules rather then active ones. We found such family and to describe it, we first define a paramecium graph. 

\begin{definition}\label{param}
Let $C_n$ be a cycle with $n$ vertices $\{v_1,...,v_n\}$, so that $v_iv_{i+1}\in E(C_{n})$, $i\in\{1,...,n-1\}$ and $v_nv_1\in E(C_{n})$. A graph obtained by adding $n$ leaves $\{u_1,...,u_n\}$ to $C_n$ in a way that $u_i$ and $v_i$ are adjacent for each $i\in\mathbb{N}_n$ will be called \textbf{a paramecium graph}. We will denote it by $PC_n$. 
\end{definition}
It is easily seen that $|V(PC_n)|=2n$ and $\rad(PC_n)=\rad(C_n)+1=\lfloor\frac{n}{2}\rfloor +1$.

\begin{theorem}\label{paramecium}
It holds
    $$\sigma^{\times}_V(PC_{n})=\left \lfloor\frac{n}{2} \right \rfloor$$ and $$\sigma^{\square}_V(PC_{n})=\sigma^{\boxtimes}_V(PC_{n})=\left  \lceil\frac{n}{2} \right \rceil. $$
\end{theorem}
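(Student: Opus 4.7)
The plan is to show each of the three spans matches its claimed value by constructing tracks for the lower bounds and running a case-analysis ``trapping'' argument for the upper bounds that fall below the radius. Throughout I will write $k = \lfloor n/2 \rfloor$, so that $\rad(PC_n) = k+1$.

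For the lower bounds I would build tracks explicitly. For $\sigma^{\times}_V(PC_n) \geq k$, I would construct direct tracks $f,g$ in which Alice and Bob occupy the cycle pair $(v_i, v_{i+k})$ at ``base'' steps (having distance exactly $k$), and between successive bases they simultaneously perform the 2-step detour $v_{\bullet}\to u_{\bullet}\to v_{\bullet}$ (pushing the distance up to $k+2$) and then simultaneously advance one step on the cycle. All encountered distances lie in $\{k,\,k+2\}$, so the minimum is $k$. For $\sigma^{\square}_V(PC_n) \geq \lceil n/2 \rceil$ I would use a similar near-antipodal choreography, but in the opposite lazy regime only one actor moves per step; Alice and Bob must therefore alternate, with one of them completing a local leaf detour and a cycle advance while the other sits still at its (near-)antipodal position. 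A direct check, with a small case split on the parity of $n$, shows the distance never drops below $\lceil n/2 \rceil$.

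For the upper bounds, Lemma \ref{rad} immediately yields $\sigma^{\boxtimes}_V(PC_n) \leq k+1$, which matches $\lceil n/2 \rceil$ when $n$ is odd, and together with Proposition \ref{strong} also handles $\sigma^{\square}_V(PC_n)$ in that case. The real work is in proving (i) $\sigma^{\times}_V(PC_n) \leq k$ for every $n$, and (ii) $\sigma^{\boxtimes}_V(PC_n) \leq k$ when $n$ is even. I would handle both by the same trapping contradiction. Assume the safety distance is $k+1$ and list the pairs $(x,y)\in V(PC_n)^2$ with $d_{PC_n}(x,y)\geq k+1$. Since on the cycle $C_n$ alone the maximum distance is $k$, every such pair has at least one coordinate at a leaf $u_i$, while the other coordinate lies in a small, explicit near-antipodal set around $v_{i+k}$. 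A case check of the few legal transitions (under active rules for (i), traditional rules for (ii)) shows that every legal move either drops the distance below $k+1$ or keeps Alice's cycle index $i$ and Bob's index $i+k$ fixed. Hence Alice must remain in $\{u_i, v_i\}$ for the whole track, contradicting surjectivity of $f$ as soon as $n\geq 3$.

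The hardest step will be the transition analysis in the strong case for even $n$: each actor can independently move or stay, inflating the number of sub-cases. The key observation there is that configurations with both actors at leaves $u_i$ and $u_j$ with $j-i \equiv \pm(n/2-1) \pmod n$ or $j-i \equiv \pm(n/2+1) \pmod n$ are dead ends in the traditional regime --- no legal move preserves distance $\geq k+1$, so such a configuration cannot appear in the interior of a track. Once these dead ends are excluded, the remaining live transitions only swap which of the two actors is at a leaf while leaving both cycle indices fixed, which is exactly what the trapping argument needs. Combining this with Proposition \ref{strong} and the lower bound constructions then produces the three claimed equalities.
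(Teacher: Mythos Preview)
Your approach is essentially the same as the paper's: explicit near-antipodal choreographies for the lower bounds, and a trapping argument (one coordinate forced to a leaf, the cycle indices then frozen) for the two nontrivial upper bounds $\sigma^{\times}_V(PC_n)\le k$ and, for even $n$, $\sigma^{\boxtimes}_V(PC_n)\le k$. The paper splits into the cases $n=2k+1$ and $n=2k$ and phrases the trapping argument narratively in terms of Alice and Bob rather than as a state-graph enumeration, but the content is the same.

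One small difference worth noting: for the lower bound $\sigma^{\times}_V(PC_{2k+1})\ge k$ the paper does not build active tracks at all; instead, having established $\sigma^{\square}_V(PC_{2k+1})=k+1$ and $\sigma^{\times}_V(PC_{2k+1})<k+1$, it simply invokes Theorem~\ref{razlika}. Your direct construction is equally valid and makes this part of the argument independent of Theorem~\ref{razlika}.

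One imprecision to fix in your write-up: in the traditional regime the dead-end leaf--leaf configurations $(u_i,u_j)$ with $j-i\equiv \pm(k-1)\pmod{2k}$ \emph{do} admit a legal move preserving distance $\ge k+1$, namely both actors staying put. So it is not literally true that ``such a configuration cannot appear in the interior of a track''; rather, the only distance-preserving transition out of such a state is the self-loop, hence any track passing through it is constant from that point and fails surjectivity. This is a phrasing issue only and does not affect the validity of the argument.
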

\begin{proof}
Let $PC_n$ be a paramecium graph with vertex labels as in Definition \ref{param}. We will refer to vertices $\{u_1,...,u_n\}$ as the pendant vertices. As we can see from the claim, the value of different spans will depend on the parity of number $n$, so our proof will have two cases, that is, we will prove:\\

For $k \geq 1$ it holds $\sigma^{\square}_V(PC_{2k+1})=\sigma^{\boxtimes}_V(PC_{2k+1})=k+1$ and $\sigma^{\times}_V(PC_{2k+1})=k$;\\

For $k \geq 2$ it holds
$\sigma^{\times}_V(PC_{2k})=\sigma^{\square}_V(PC_{2k})=\sigma^{\boxtimes}_V(PC_{2k})=k$.\\

Let us prove the claim for $PC_{2k+1}$. We have $\sigma^{\square}_V(PC_{2k+1}) \leq \rad(PC_{2k+1})=\lfloor\frac{2k+1}{2}\rfloor +1=k+1$.
Now, let us show that $\sigma^{\square}_V(PC_{2k+1})=k+1$. We will prove this by describing Alice's and Bob's movements in $PC_{2k+1}$ that follow lazy movement rules, which keep the distance $k+1$.
First, observe that for each pendant vertex $u_i$, there are two cycle vertices that are on the distance $k+1$ from it, they are $v_m$ and $v_{m+1}$, where $m=(i+k)\mod (2k+1)$. Let us describe first few Alice's and Bob's steps with Table \ref{tbl1}.
\begin{center}
\begin{tabular}{ |c|c|c|c|c|c|c|c|} 
 \hline
  & 1 & 2 & 3 & 4 & 5 & 6 & 7 \\ 
 \hline
 Alice's position & $u_{1}$ & $u_{1}$ & $u_{1}$ & $u_{1}$ & $v_{1}$ & $v_{2}$ & $u_{2}$\\ 
Bob's position & $u_{k+1}$ & $v_{k+1}$ & $v_{k+2}$ & $u_{k+2}$ & $u_{k+2}$ & $u_{k+2}$ & $u_{k+2}$\\
 \hline
\end{tabular}

\captionof{table}{ First few Alice's and Bob's steps in $PC_{2k+1}$ keeping the distance $k+1$ with lazy movement rules} 
\label{tbl1}
\end{center}
In Figure \ref{neparni} we can see the illustration of these steps for $PC_5$, for which $k=2$.
It is easy to see that, by continuing this way, Alice and Bob will visit all the vertices, while always keeping the distance greater or equal to $k+1$.\\
\begin{center}
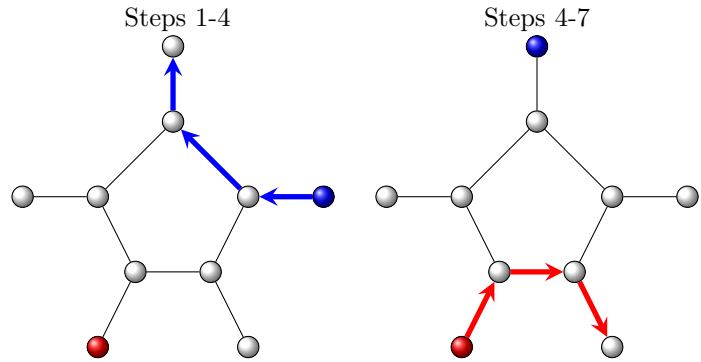

\begin{tabular}{cc}
Steps $1$-$4$ & Steps $4$-$7$\\
\begin{tikzpicture}
\node (1) at (1,0) [pointred] {};
\node (2) at (3,0) [point] {};
\node (3) at (1.5,1) [point] {};
\node (4) at (2.5,1) [point] {};
\node (5) at (0,2) [point] {};
\node (6) at (1,2) [point] {};
\node (7) at (3,2) [point] {};
\node (8) at (4,2) [pointblue] {};
\node (9) at (2,3) [point] {};
\node (10) at (2,4) [point] {};
\draw[-stealth, blue, line width=2pt] (8) -- (7);
\draw[-stealth, blue, line width=2pt] (7) -- (9);
\draw[-stealth, blue, line width=2pt] (9) -- (10);
\draw (1) -- (3) -- (4) (2) -- (4) -- (7) (9) -- (6) -- (5) (6) -- (3);
\end{tikzpicture} \qquad
&
\begin{tikzpicture}
\node (1) at (1,0) [pointred] {};
\node (2) at (3,0) [point] {};
\node (3) at (1.5,1) [point] {};
\node (4) at (2.5,1) [point] {};
\node (5) at (0,2) [point] {};
\node (6) at (1,2) [point] {};
\node (7) at (3,2) [point] {};
\node (8) at (4,2) [point] {};
\node (9) at (2,3) [point] {};
\node (10) at (2,4) [pointblue] {};
\draw[-stealth, red, line width=2pt] (1) -- (3);
\draw[-stealth, red, line width=2pt] (3) -- (4);
\draw[-stealth, red, line width=2pt] (4) -- (2);
\draw (8) -- (7) -- (9) (10) -- (9) -- (6) (5) -- (6) -- (3) (4) -- (7);
\end{tikzpicture}\\
\end{tabular}
\captionof{figure}{ First $7$ steps of Alice and Bob in $PC_5$, following lazy movement rules and keeping the distance at least $3$} \label{neparni}
\end{center}

We have shown that $\sigma^{\square}_V(PC_{2k+1})=k+1$ and from Proposition \ref{strong} it follows $\sigma^{\boxtimes}_V(PC_{2k+1})=k+1$.\\
Let us now prove that $\sigma^{\times}_V(PC_{2k+1})=k$. First, we will show that $\sigma^{\times}_V(PC_{2k+1})<k+1$. We know that $\rad(C_{2k+1})=k$, so if at any point Alice and Bob are both in cycle vertices, their distance is less then $k+1$. Let us assume they can keep the distance $k+1$ at all times. If Alice and Bob both start in pendant vertices, in the next step they are both in cycle vertices, so the only possible start is for one of them to be in a cycle vertex, and for the other to be in a pendant vertex. Moreover, since we are trying to keep the maximal distance, they have to start at the diametrically opposite sides of the cycle. Formally, and without the loss of generality, let Alice start in the pendant vertex $u_1$ and let Bob start in the cycle vertex $v_{k+1}$, since $d(u_1,v_{k+1})=k+1$. In the next step, Alice has only one option, to move to $v_1$, and Bob has $3$ options. If he moves to $v_{k}$ or $v_{k+2}$, they are both in the cycle vertices, and the third option takes him to $u_{k+1}$, which leaves them in an analogous situation as in the starting positions. So $\sigma^{\times}_V(PC_{2k+1})<k+1$. Now the fact that $\sigma^{\times}_V(PC_{2k+1})=k$ follows from $\sigma^{\square}_V(PC_{2k+1})=k+1$, $\sigma^{\times}_V(PC_{2k+1})<k+1$ and Theorem \ref{razlika}.\\
Let us prove the claim for $PC_{2k}$.  Its radius is $k+1$, so that is the maximal possible span value for each of the spans. Since $\rad(C_{2k})=k$ we immediately see that for all three movement rules, if at any point Alice and Bob are both in the vertices of the cycle, their distance will be at most $k$.\\ 
For active movement rules, the proof that $\sigma^{\times}_V(PC_{2k})<k+1$ is the same as for $PC_{2k+1}$. To show that $\sigma^{\times}_V(PC_{2k})=k$ let us describe Alice's and Bob's movements in $PC_{2k}$, following active movement rules and keeping the distance $k$. We will describe the first few steps with Table \ref{tbl2}.

\begin{center}
\begin{tabular}{ |c|c|c|c|c|c|c| } 
 \hline
 & 1 & 2 & 3 & 4 & 5 & 6 \\ 
 \hline
 Alice's position &  $u_{1}$ & $v_{1}$ & $v_{2}$ & $u_{2}$ & $v_{2}$ & $v_{3}$\\ 
Bob's position & $u_{k+1}$ & $v_{k+1}$ & $v_{k+2}$ & $u_{k+2}$ & $v_{k+2}$ & $v_{k+3}$\\ 
 \hline
\end{tabular}
\captionof{table}{ First few Alice's and Bob's steps in $PC_{2k}$ keeping the distance $k$ with active movement rules}
\label{tbl2}
\end{center}
By continuing in this way, Alice and Bob will visit all the vertices, keeping their distance, so we have proven $\sigma^{\times}_V(PC_{2k})=k$. The illustration of first $3$ steps in $PC_6$ is given in Figure \ref{parni}.
\begin{center}
\begin{tikzpicture}[scale=0.8]
\node (1) at (1,0) [pointred] {};
\node (2) at (3,0) [point] {};
\node (3) at (1.5,1) [point] {};
\node (4) at (2.5,1) [point] {};
\node (5) at (0,2) [point] {};
\node (6) at (1,2) [point] {};
\node (7) at (3,2) [point] {};
\node (8) at (4,2) [point] {};
\node (9) at (1.5,3) [point] {};

\node (10) at (2.5,3) [point] {};
\node (11) at (1,4) [point] {};
\node (12) at (3,4) [pointblue] {};
\draw (3) -- (4) -- (7) -- (10) -- (9) -- (6) -- (3) (2) -- (4) (5) -- (6) (7) -- (8) (9) -- (11);
\draw[-stealth, red, line width=2pt] (1) -- (3);
\draw[-stealth, blue, line width=2pt] (12) -- (10);
\end{tikzpicture}
\qquad
\begin{tikzpicture}[scale=0.8]
\node (1) at (1,0) [point] {};
\node (2) at (3,0) [point] {};
\node (3) at (1.5,1) [pointred] {};
\node (4) at (2.5,1) [point] {};
\node (5) at (0,2) [point] {};
\node (6) at (1,2) [point] {};
\node (7) at (3,2) [point] {};
\node (8) at (4,2) [point] {};
\node (9) at (1.5,3) [point] {};
\node (10) at (2.5,3) [pointblue] {};
\node (11) at (1,4) [point] {};
\node (12) at (3,4) [point] {};
\draw (4) -- (7) -- (10) (9) -- (6) -- (3) (1) -- (3) (2) -- (4) (5) -- (6) (7) -- (8) (9) -- (11) (10) -- (12);
\draw[-stealth, red, line width=2pt] (3) -- (4);
\draw[-stealth, blue, line width=2pt] (10) -- (9);
\end{tikzpicture}
\qquad
\begin{tikzpicture}[scale=0.8]
\node (1) at (1,0) [point] {};
\node (2) at (3,0) [point] {};
\node (3) at (1.5,1) [point] {};
\node (4) at (2.5,1) [pointred] {};
\node (5) at (0,2) [point] {};
\node (6) at (1,2) [point] {};
\node (7) at (3,2) [point] {};
\node (8) at (4,2) [point] {};
\node (9) at (1.5,3) [pointblue] {};
\node (10) at (2.5,3) [point] {};
\node (11) at (1,4) [point] {};
\node (12) at (3,4) [point] {};
\draw (3) -- (4) -- (7) -- (10) -- (9) -- (6) -- (3) (1) -- (3) (5) -- (6) (7) -- (8) (10) -- (12);
\draw[-stealth, red, line width=2pt] (4) -- (2);
\draw[-stealth, blue, line width=2pt] (9) -- (11);
\end{tikzpicture}

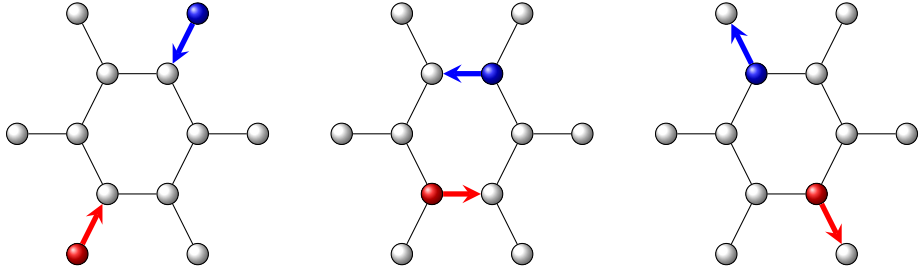
\captionof{figure}{ First $3$ steps of Alice and Bob in $PC_6$, following active movement rules and keeping the distance at least $3$} \label{parni}
\end{center}

Let us now observe traditional movement rules. The situation is a little bit different, since it doesn't necessarily have to occur that both of them are in the cycle vertices at the same time. Let us show that $\sigma^{\boxtimes}_V(PC_{2k})<k+1$. Let us assume the opposite, that there are lazy walks that Alice and Bob can take in order to keep the distance $k+1$. Without the loss of generality, let Alice start in the pendant vertex $u_1$. That leaves Bob starting in either the cycle vertex $v_{k+1}$ or the pendant  vertex $u_{k+1}$. It can be easily seen that neither of them can ever move to any cycle vertex other than $v_1$ and $v_{k+1}$ without getting closer than $k+1$ to one another. It immediately follows that $\sigma^{\square}_V(PC_{2k})<k+1$. To show that $\sigma^{\square}_V(PC_{2k})=k$, we can easily construct Alice's and Bob's walks, similar as with the active movement rules, just following lazy movement rules, so each of the actors doesn't move in every other step. So $\sigma^{\square}_V(PC_{2k})=k$, which also gives us $\sigma^{\boxtimes}_V(PC_{2k})=k$.
\end{proof}

Also, worth noticing is that graph $PC_3$ is the graph of the smallest order for which its Cartesian vertex span is greater than its direct vertex span. Let us prove that claim.

\begin{proposition}\label{najmanji}
Graph $PC_3$ is the graph of the smallest order such that its Cartesian vertex span is greater than its direct vertex span.
\end{proposition}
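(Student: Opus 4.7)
The plan is to split the statement into two parts: first, verifying that $PC_3$ itself satisfies $\sigma^{\square}_V(PC_3) > \sigma^{\times}_V(PC_3)$; and second, showing that no connected graph of strictly smaller order does. Since $|V(PC_3)| = 6$, the second half reduces to the finitely many connected graphs of order $n \in \{2,3,4,5\}$.

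For the first part I would simply specialise Theorem \ref{paramecium} to $n = 3$ (the odd case with $k = 1$), which yields $\sigma^{\square}_V(PC_3) = 2$ and $\sigma^{\times}_V(PC_3) = 1$ at once.

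The second part requires a case analysis on the connected graphs of order at most five, but two observations keep the work tractable. By Theorem \ref{razlika}, the only bad case to rule out is $\sigma^{\square}_V(G) = \sigma^{\times}_V(G) + 1$; and by Lemma \ref{rad} combined with the fact that every connected graph of order at most $5$ has $\rad(G) \leq 2$, the bad pair $(\sigma^{\square}_V(G), \sigma^{\times}_V(G))$ can only be $(1,0)$ or $(2,1)$. Whenever $G$ has a cut-edge, Proposition \ref{cut} provides an upper bound on $\sigma^{\square}_V(G) \leq \sigma^{\boxtimes}_V(G)$ that typically matches an easily constructed active walk serving as a lower bound on $\sigma^{\times}_V(G)$; this dispatches all trees and most pendant-containing graphs in one stroke, and the remaining bipartite cases with a cut-edge are handled by the same path-crossing obstruction that prevents $\sigma^{\square}_V(P_4)$ from exceeding $\sigma^{\times}_V(P_4)$.

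The main obstacle is therefore the short residual list of $2$-edge-connected, radius-$2$ graphs on at most five vertices, which is essentially $C_4$, $C_5$, $K_{2,3}$ and a handful of small chorded five-cycles. For each of these I would exhibit an explicit rotational active walk along a pair of symmetric orbits, modelled on the constructions in the proof of Theorem \ref{paramecium}, showing $\sigma^{\times}_V(G) \geq \sigma^{\square}_V(G)$. The subtlest subcase should be $C_5$, where the odd length removes the usual bipartition obstruction and a direct walk-by-walk comparison of $\sigma^{\square}_V(C_5)$ and $\sigma^{\times}_V(C_5)$ is needed.
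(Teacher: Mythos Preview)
Your plan is correct and lands on the same exhaustive case analysis the paper carries out, but the organisation differs in two noticeable ways.

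First, the paper opens with a shortcut you do not use: it invokes the fact that $\sigma^{\times}_V(G)=0$ forces $\sigma^{\square}_V(G)=0$, so any counterexample must already satisfy $\sigma^{\square}_V(G)\geq 2$ and hence $\rad(G)\geq 2$. This immediately discards the $(1,0)$ possibility that you carry along, and reduces the search to the twelve connected radius-$2$ graphs on four or five vertices ($P_4$, $C_4$, $P_5$, $C_5$, and the eight graphs of Figure~\ref{pr3}), for each of which the paper simply tabulates both span values without further argument.

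Second, you try to avoid brute tabulation by splitting along the presence of a cut-edge and invoking Proposition~\ref{cut}. That is a nice idea, but be aware it does not by itself ``dispatch all trees in one stroke'': for $P_5$ the best bound Proposition~\ref{cut} yields is $\sigma^{\boxtimes}_V(P_5)\leq 2$ (choosing the cut-edge $v_2v_3$ gives $\max\{1,2\}=2$), which does not yet beat $\sigma^{\times}_V(P_5)=1$. You do anticipate this with your ``path-crossing obstruction'' fallback, so the gap is only in the phrasing, not the logic; but in the end you will still need direct span computations for several individual graphs, just as the paper does. Your residual list of $2$-edge-connected radius-$2$ graphs on at most five vertices is accurate (it is $C_4$, $C_5$, $K_{2,3}$, and two chorded five-cycles), and the rotational active walks you propose are exactly what settles them.

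In short: same proof at the core, with your version trading the paper's quick radius-based filter for a cut-edge filter via Proposition~\ref{cut}. Your route is a little more self-contained (it does not rely on the $\sigma^{\times}_V=0\Rightarrow\sigma^{\square}_V=0$ implication), at the cost of one or two extra ad hoc checks.
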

\begin{proof}
Let $G$ be a graph such that its Cartesian vertex span is greater than its direct vertex span. Since for any graph, for which its direct vertex span equals 0, it follows that its Cartesian vertex span is also 0, then $\sigma^{\times}_V(G)$ must be at least $1$ and $\sigma^{\square}_V(G)$ at least $2$. All connected graphs up to $3$ vertices have radii at most $1$. Only connected graphs with $4$ vertices of radius $2$ are $P_4$ ($\sigma^{\times}_V(P_4) = 1$, $\sigma^{\square}_V(P_4) = 0$) and $C_4$ ($\sigma^{\times}_V(C_4) = 2$, $\sigma^{\square}_V(C_4) = 1$).\\
Lastly, with $5$ vertices, we have $10$, up to isomorphism, connected graphs of radius $2$: $P_5$ ($\sigma^{\times}_V(P_5) = 1$, $\sigma^{\square}_V(P_5) = 0$), $C_5$ ($\sigma^{\times}_V(C_5) = 2$, $\sigma^{\square}_V(C_5) = 2$), and the remaining $8$ are presented in Figure \ref{pr3}, alongside their respective direct and Cartesian vertex spans:\\
\begin{center}
    \begin{tabular}{cccc}
\begin{tikzpicture}
\node (1) at (0,0) [point] {};
\node (2) at (1,1) [point] {};
\node (3) at (0,2) [point] {};
\node (4) at (2,2) [point] {};
\node (5) at (2,0) [point] {};
\draw (1) -- (2) -- (3) -- (1) (2) -- (4) -- (5);
\end{tikzpicture}
&
\begin{tikzpicture}
\node (1) at (0,0) [point] {};
\node (2) at (1,1) [point] {};
\node (3) at (0,2) [point] {};
\node (4) at (2,2) [point] {};
\node (5) at (2,0) [point] {};
\draw (1) -- (2) -- (3) -- (1) (3) -- (4) (1) -- (5);
\end{tikzpicture}
& 
\begin{tikzpicture}
\node (1) at (0,0) [point] {};
\node (2) at (1,1) [point] {};
\node (3) at (0,2) [point] {};
\node (4) at (2,2) [point] {};
\node (5) at (2,0) [point] {};
\draw (1) -- (2) -- (3) -- (1) (3) -- (4) (1) -- (5) (4) -- (5);
\end{tikzpicture}
&
\begin{tikzpicture}
\node (1) at (0,0) [point] {};
\node (2) at (1,1) [point] {};
\node (3) at (0,2) [point] {};
\node (4) at (2,2) [point] {};
\node (5) at (2,0) [point] {};
\draw (1) -- (2) (3) -- (1) (3) -- (4) (1) -- (5) (4) -- (5);
\end{tikzpicture}\\
$\sigma^{\times}_V(G) = 1$ &
$\sigma^{\times}_V(G) = 1$ &
$\sigma^{\times}_V(G) = 2$ &
$\sigma^{\times}_V(G) = 2$\\
$\sigma^{\square}_V(G) = 1$ &
$\sigma^{\square}_V(G) = 1$ &
$\sigma^{\square}_V(G) = 1$ &
$\sigma^{\square}_V(G) = 1$\\
\begin{tikzpicture}
\node (1) at (0,0) [point] {};
\node (2) at (1,1) [point] {};
\node (3) at (0,2) [point] {};
\node (4) at (2,2) [point] {};
\node (5) at (2,0) [point] {};
\draw (1) -- (2) -- (3) -- (1) (3) -- (4) (1) -- (5) (2) -- (5);
\end{tikzpicture}
&
\begin{tikzpicture}
\node (1) at (0,0) [point] {};
\node (2) at (1,1) [point] {};
\node (3) at (0,2) [point] {};
\node (4) at (2,2) [point] {};
\node (5) at (2,0) [point] {};
\draw (1) -- (2) (3) -- (1) (3) -- (4) (1) -- (5);
\end{tikzpicture}
&
\begin{tikzpicture}
\node (1) at (0,0) [point] {};
\node (2) at (1,1) [point] {};
\node (3) at (0,2) [point] {};
\node (4) at (2,2) [point] {};
\node (5) at (2,0) [point] {};
\draw (1) -- (5) -- (4) -- (3) -- (1) (1) -- (2) -- (4);
\end{tikzpicture}
&
\begin{tikzpicture}
\node (1) at (0,0) [point] {};
\node (2) at (1,1) [point] {};
\node (3) at (0,2) [point] {};
\node (4) at (2,2) [point] {};
\node (5) at (2,0) [point] {};
\draw (1) -- (2) -- (3) -- (1) (3) -- (4) (1) -- (5) (2) -- (5) (4) -- (5);
\end{tikzpicture}\\
$\sigma^{\times}_V(G) = 1$ &
$\sigma^{\times}_V(G) = 1$ &
$\sigma^{\times}_V(G) = 2$ &
$\sigma^{\times}_V(G) = 2$\\
$\sigma^{\square}_V(G) = 1$ &
$\sigma^{\square}_V(G) = 1$ &
$\sigma^{\square}_V(G) = 1$ &
$\sigma^{\square}_V(G) = 1$
    \end{tabular}
    
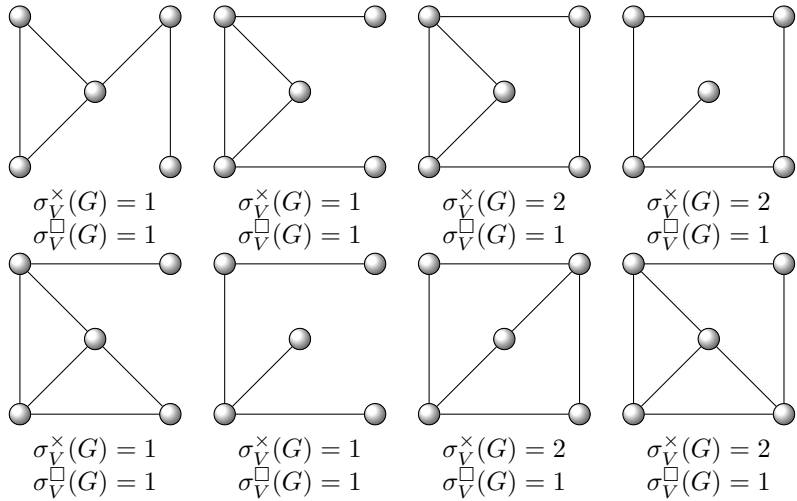
\captionof{figure}{ Connected graphs of order $5$ with radius $2$, other than $P_5$ and $C_5$} 
\label{pr3}
\end{center}
So, from these observations it is obvious that such graph needs to have at least $6$ vertices. And that proves our claim.
\end{proof}

Another class of graphs that we observed are binary trees. They are a useful class of graphs in observing hierarchical structures and are sometimes embedded in networks to help solve some network problems. They are also important in data science, because of the efficient way of searching through the data organized in this way. As searching through graph-like structures is somewhat similar to walking around that graph, we thought it might be useful to observe the spans of binary trees, for the possibility of $2$ simultaneous searches may arise. The structure for which we solved the span values is a perfect binary tree.
\textbf{A perfect binary tree} is a rooted binary tree in which all interior nodes have exactly $2$ children. The height of a binary tree is the number of non-root levels within the tree, and it is usually denoted by $h$. Height of the binary tree is also its radius. We will denote the perfect binary tree of height $h$ by $BT_h$.

\begin{theorem}\label{binary} 
It holds $$\sigma^{\boxtimes}_V(BT_h)=\sigma^{\times}_V(BT_h)=\sigma^{\square}_V(BT_h)=h-1.$$
\end{theorem}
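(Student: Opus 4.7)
My plan is to prove the upper bound $\sigma^{\boxtimes}_V(BT_h)\le h-1$ first, which by Proposition~\ref{strong} also caps the direct and Cartesian spans, and then to produce explicit walks realizing $h-1$ for each of the three span types.

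For the \emph{upper bound} I would exploit the fact that the root $r$ is a cut-vertex separating two copies of $BT_{h-1}$ rooted at $c_1$ and $c_2$. Since Alice must visit vertices in both subtrees, her walk has to pass through $r$ at some step; focus on the first step $b$ at which she enters the $c_2$-subtree after having been in the $c_1$-subtree. If $[a_0,a]$ denotes the maximal run of $r$-steps ending just before $b$, then assuming $m_G(f,g)\ge h$ one shows by a short induction that Bob is \emph{frozen} at a single leaf throughout $[a_0,a]$: the constraint $d(r,g(\cdot))\ge h$ forces Bob to be at a leaf at each such step, and moving from a leaf to its parent would drop the distance to $h-1$, so Bob cannot move. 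Moreover, at the step $a_0-1$ Alice sits at $c_1$, and the requirement $d(c_1,g(a_0-1))\ge h$ forces that frozen leaf to lie inside the $c_2$-subtree. At step $b$ Alice arrives at $c_2$, so now $g(b)$ must sit in the $c_1$-subtree at depth at least $h-1$; but $g(b)$ is at most one step away from the frozen $c_2$-subtree leaf, hence still in the $c_2$-subtree. Disjointness of the two subtrees yields the contradiction, so $\sigma^{\boxtimes}_V(BT_h)\le h-1$.

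For the \emph{lower bounds} I would construct a pair of walks in which Alice and Bob always occupy opposite subtrees of $r$ at complementary depths. The main idea is: let Alice perform an Euler-tour traversal of the $c_1$-subtree and simultaneously let Bob perform a traversal of the $c_2$-subtree whose depth sequence equals $h+1-d_A(t)$. Because every down-step in an Euler tour is balanced by an up-step, Bob's required depth sequence is realizable in his subtree, and a careful choice of branch at each fork lets him visit every vertex of the $c_2$-subtree. Whenever Alice and Bob sit in opposite subtrees of $r$ at depths $d_A$ and $h+1-d_A$, the tree distance between them is exactly $h+1\ge h-1$. After this phase Alice crosses $c_1\to r\to c_2$ while Bob takes a short excursion between a leaf of the $c_2$-subtree and its parent, so that the distance never drops below $h-1$; then a symmetric Phase 3 has Bob finishing his traversal of the $c_1$-subtree while Alice handles the complementary depth schedule in the $c_2$-subtree. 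For the Cartesian (lazy) span the same blueprint goes through more easily, as at each step only one actor moves and the other simply stays, removing the main coordination burden and giving $\sigma^{\square}_V(BT_h)\ge h-1$ directly.

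The \emph{main obstacle} I expect is the hand-over in the direct (active) case: during Alice's forced crossing $c_1\to r\to c_2$, Bob must keep moving at every single step while both remaining at distance at least $h-1$ from Alice and ending up poised to perform his own crossing $c_2\to r\to c_1$ without colliding with Alice, who by then is inside the $c_2$-subtree. Scheduling the two crossings at disjoint time intervals, and verifying that every intermediate step still meets the depth-complementarity bound, is the delicate combinatorial part --- essentially a controlled pursuit between the two actors, analogous to the one engineered in the paramecium proof (Theorem~\ref{paramecium}).
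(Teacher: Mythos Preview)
Your upper bound argument is correct and in fact more carefully articulated than the paper's (which argues the same cut-vertex idea more informally). The key observation --- that while Alice sits at $r$ Bob is pinned to a single leaf, and that the leaf must lie in the subtree Alice is about to enter --- is exactly what drives the contradiction.

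For the lower bounds, however, the paper takes a much simpler route that sidesteps your depth-complementarity machinery entirely. Rather than having the two actors execute synchronized Euler tours at complementary depths, the paper lets one actor stay \emph{idle} while the other freely explores an entire branch: in the lazy case ``idle'' means literally standing at a leaf of the opposite branch (giving distance at least $h+1$ throughout), and in the active case ``idle'' means oscillating between a leaf and its parent (giving distance at least $h$). After both branches have been explored in this way, a single branch-swap is performed; the minimum distance $h-1$ is attained precisely when the crossing actor reaches the child of the root in the branch containing the idle actor. The only subtlety in the active case is a parity check: one must start the idle actor at a leaf or at its parent, depending on the parity of $h$, so that the idle actor is at a leaf at the exact step the other reaches $c_1$ (otherwise the distance would drop to $h-2$). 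This single parity observation is all that is needed to resolve the ``main obstacle'' you flag; the elaborate two-sided Euler-tour coordination is unnecessary.

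So your plan is not wrong, but it is over-engineered for Phase~1 (where you maintain distance $h+1$, more than required) while leaving the genuinely delicate part --- the active-rule crossover --- unfinished. The paper's idle/oscillate scheme handles both phases uniformly and reduces the crossover to a one-line parity argument.
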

\begin{proof}
First, we will prove that $\sigma^{\boxtimes}_V(BT_h)<h$, which will prove that all vertex spans are less than $h$, and then we will describe Alice's and Bob's walks through such graphs in a way that leaves them on a safety distance $h-1$ using different movement rules.\\
Let us denote the root of $BT_h$ by $x$. Alice and Bob can start their walks on the same side of $x$, on different sides of $x$ or one of them can start their walk in the vertex $x$. If they are on the same side, their distance on the starting point is already less than $h$. If they are on different sides, one of them will have to move to the other side, while the other one is still on that side or in $x$, as no movement rules allow them to simultaneously switch sides, because they have to pass through $x$ first. Which, brings us to the third option, one of them being on one side and the other in $x$. While one is in $x$, they can only be on distance $h$ if the other is in some leaf of the tree. But then, the only movement that will leave them in a safety distance $h$ will be to move from $x$ to the opposite side of the other actor. Which brings us back to the second situation. Therefore, it's impossible for Alice and Bob to walk through all the vertices of a perfect binary tree of height $h$ keeping safety distance $h$ at all times.\\
Now, let us describe Alice's and Bob's walks that keep them at safety distance $h-1$.\\
Let us name the two subgraphs induced by components of $BT_h-x$ as the left branch and the right branch.\\
Opposite lazy walks described:
Alice starts her walk in any leaf of the left branch and Bob in any leaf of the right branch. Alice now walks through all of the left branch while Bob stands still. Since he is in a leaf of the other branch Alice has never gotten closer than $h+1$ to him. Now she can go back to any of the leaves of the left branch so Bob can visit all vertices of the right branch. Now it is time to switch branches. While Alice is in a leaf of the left branch, Bob can go through the root $x$ and go down the left branch, always choosing a vertex that is more distant from Alice, till he reaches a leaf. That way he will be closest to her at the root of the left branch where his distance from Alice is exactly $h-1$. Now Alice can go up the left branch and switch to the right branch and go down to any leaf. Exactly as in the beginning, now both of them can in turns visit all the vertices in their respective branches, never getting closer to one another than $h-1$. So $\sigma^{\square}_V(BT_h)=h-1$. Also, since $h-1=\sigma^{\square}_V(BT_h) \leq \sigma^{\boxtimes}_V(BT_h) < h$, then $\sigma^{\boxtimes}_V(BT_h)=h-1$.\\
Active walks described:
Alice and Bob can visit all the vertices as they did in the opposite lazy walks, with few adjustments, as there is no more option for someone to stand still while the other one is walking. So when Alice is visiting all the vertices of the left branch, Bob can just switch from his leaf to the neighbour vertex and back. And the same goes for Alice when Bob is visiting his right branch vertices. And the other thing we need to keep in mind is that, when switching sides, when Bob is in the root of the left branch, Alice needs to be in a leaf, or they would come closer to each other than $h-1$, and for that to be possible, we just have to make sure to place Alice in the beginning of her walk in a leaf of the left branch if $h$ is odd and in a leaf neighbour vertex if $h$ is even. The illustration for $h=3$ is given in Figure \ref{binarno}.  

\begin{center}
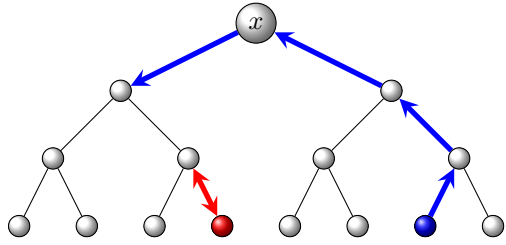

\begin{tikzpicture}[scale=0.9]
\node (1) at (0,0) [point] {};
\node (2) at (1,0) [point] {};
\node (3) at (2,0) [point] {};
\node (4) at (3,0) [pointred] {};
\node (5) at (4,0) [point] {};
\node (6) at (5,0) [point] {};
\node (7) at (6,0) [pointblue] {};
\node (8) at (7,0) [point] {};
\node (9) at (0.5,1) [point] {};
\node (10) at (2.5,1) [point] {};
\node (11) at (4.5,1) [point] {};
\node (12) at (6.5,1) [point] {};
\node (13) at (1.5,2) [point] {};
\node (14) at (5.5,2) [point] {};
\node (15) at (3.5,3) [point] {$x$};
\draw (1) -- (9) (2) -- (9) (3) -- (10)  (5) -- (11) (6) -- (11) (7) -- (12) (8) -- (12) (9) -- (13) (10) -- (13) (11) -- (14)  ;
\draw[-stealth, blue, line width=2pt] (7) -- (12);
\draw[-stealth, blue, line width=2pt] (12) -- (14);
\draw[-stealth, blue, line width=2pt] (15) -- (13);
\draw[-stealth, blue, line width=2pt] (14) -- (15);
\draw[stealth-stealth, red, line width=2pt] (4) -- (10);
\end{tikzpicture}
\captionof{figure}{Bob needs $4$ steps to get from the leaf of the right branch to the root of the left branch. At the same time Alice goes from the leaf to the neighbour and back, finishing on the leaf, keeping the distance $2$.} \label{binarno}
\end{center}
That way such switch of the positions is possible so the rest can take place as in lazy movement rules. So $\sigma^{\times}_V(BT_h)=h-1$.
\end{proof}


Based on the definition of an $n$-friendly graph and Observation 6.3. from \cite{banic} we observed different vertex span values for some classes of graphs. Our result from Theorem \ref{razlika} allowed us to expand these considerations. First observe that if there is a vertex $v$ in a graph $G$ of order $n$, such that $d(v)=n-1$,  then $\rad(G)=1$ and $\sigma^{\boxtimes}_{V}(G)\leq 1$. From this it is easily seen that for complete graphs $K_n$, $n\geq 3$, wheel graphs $W_n$, $n\geq 4$, and star graphs $S_n$, $n\geq 4$, the value of all spans will be equal to $1$. We present an overview of the other classes in Table \ref{familije}. 
\renewcommand{\arraystretch}{1.5}
\begin{center}
\captionof{table}{Span values for some classes of graphs}\label{familije}
\begin{tabular}{ |c|c|c|c|c|c| } 
 \hline
 Graph class & Radius & $\sigma^{\boxtimes}_{V}$ & $\sigma^{\times}_{V}$ & $\sigma^{\square}_{V}$ & Comment \\ 
 \hline
 $P_{n}$, $n\geq 2$ & $\lfloor\frac{n}{2}\rfloor$ & 1 & 1 & 0 & \cite{banic}\\ 
 $C_{n}$, $n\geq 3$ & $\lfloor\frac{n}{2}\rfloor$ &$\lfloor\frac{n}{2}\rfloor$ & $\lfloor\frac{n}{2}\rfloor$ &
$\left\{ \begin{array}{lc}
     \lfloor\frac{n}{2}\rfloor, & n \text{ odd} \\
     \frac{n}{2}-1, & n \text{ even}
\end{array} \right.$ & \cite{banic}, Thm \ref{razlika}\\ 
 $Q_{n}$, $n\geq 2$ & $n$ &$n$ & $n$ & $n-1$ & \cite{banic}, Thm \ref{razlika}\\ 
 $K_{r,s}$, $r,s\geq 2$ & $2$ & $2$ & $2$ & $1$ & \cite{banic}, Thm \ref{razlika}\\
 $PC_{n}$ & $\left \lfloor\frac{n}{2} \right \rfloor+1$ & $\left  \lceil\frac{n}{2} \right \rceil$ & $\left \lfloor\frac{n}{2} \right \rfloor$ & $\left  \lceil\frac{n}{2} \right \rceil$ & Thm \ref{paramecium}\\
 $BT_{h}$ & $h$ & $h-1$ & $h-1$ & $h-1$ & Thm \ref{binary}\\
 \hline
\end{tabular}
\end{center}

\section{Conclusion}
Inspired by the work in \cite{banic}, we observed the same problem in a somewhat different way. With that, we obtained a result for the relation between direct and Cartesian vertex spans and using this result we calculated spans for some graph classes. We saw that in the majority of the observed cases, direct vertex span is greater or equal to the Cartesian vertex span, but we also found a family for which the opposite holds. It would be interesting to find a characterization for the graphs for which Cartesian vertex span is greater than their direct vertex span and it would be interesting to discuss what is necessary in the graph structure for greater safety distance to be achieved by lazy movement rules instead of active movement rules, and vice versa. Further work may also include generalizing the notion of spans to more than two actors or analyzing the minimal number of steps in which the maximal safety distance can be achieved.

\end{document}